\documentclass[10pt]{article}
\usepackage{amsmath,amssymb,amsthm,amscd,esint}

\newtheorem{theorem}{Theorem}[section]
\newtheorem{lemma}[theorem]{Lemma}

\newtheorem{proposition}[theorem]{Proposition}

\theoremstyle{definition}
\newtheorem{definition}[theorem]{Definition}

\theoremstyle{remark}

\numberwithin{equation}{section}
\numberwithin{theorem}{section}

\begin{document}

\title{Convergence of Scalar Curvature of Long Time K\"ahler-Ricci Flow on K\"ahler Manifold}

\author{Lei Zhang\thanks{Supported by Postdoctoral Fellowship Program GZC20240867. Email: leizhang92@mail.tsinghua.edu.cn}\\
Yau Mathematical Sciences Center, Tsinghua University\\
Zhenlei Zhang\thanks{Supported partially by NSFC 11431009 and NSFC 11771301. Email: zhleigo@aliyun.com}\\
School of Mathematical Sciences, Capital Normal University}
\date{}

\maketitle



\begin{abstract}
This paper is concerned with a class of the long time K\"ahler-Ricci flow on a compact K\"ahler manifold. It is shown that the uniform $\mu$-entropy or uniform Sobolev inequality along the normalized K\"ahler-Ricci flow with semiample canonical bundle. As a consequence, we prove that the scalar curvature of the K\"ahler metrics along the normalized K\"ahler-Ricci flow converge to negative Kodaira dimension of the compact K\"ahler manifold.
\end{abstract}

\tableofcontents



\section{Introduction}
The Analytic Minimal Model Program through K\"ahler-Ricci flow was initiated by Song-Tian and their collaborators \cite{ST07,ST12,ST17,SW13,TZh06}. Many progresses have been made on the program. The behavior of long-time solutions of the K\"ahler-Ricci flow has been extensively studied \cite{Cao85,GTZ20,HLT24+,JS22,LTZ26+,ST07,ST12,ST16,ST17,STZ19,TZhaZ16,TWY18,TosZh15,Tsu88,Wan18,ZhY19} after the pioneering work of Song-Tian \cite{ST07,ST12} in the framework of the analytic minimal model program with Ricci flow \cite{ST17}. There has since been extensive work done to better understand the behavior of the K\"ahler-Ricci flow in the finite-time singularities in \cite{Bam18,CW12,CW20,HJST24,JST23+,SesT08,SSW13,SW13,TZhZ16,TosZh18,ZhaZh23} and references therein. For a comprehensive overview of these developments, readers are referred to the surveys by Song-Weinkove \cite{SW13-1} and Tosatti \cite{To18}.

Let $(X^n,\omega_0)$ be a compact K\"ahler manifold of complex dimension $n$. The K\"ahler-Ricci flow on $X$ defined by
\begin{equation}\label{KRF}
  \frac{\partial\tilde{\omega}}{\partial t}=-Ric(\tilde{\omega}),\qquad \tilde{\omega}(0)=\omega_0.
\end{equation}
The flow has a solution for short positive time \cite{Cao85} and the maximal existence time $T$ of (\ref{KRF}) is uniquely determined by a result of Tian-Zhang in \cite{TZh06}, namely,
$$T=\sup\{t>0:~[\omega_0]-2\pi tc_1(X)>0\}.$$
The K\"ahler classes $[\tilde{\omega}(t)]$ at time $t$ is precisely given by $[\omega_0]-2\pi tc_1(X).$ It is well-known that the K\"ahler-Ricci flow (\ref{KRF}) admits a long-time solution if and only if the canonical line bundle $K_X$ is nef. The abundance conjecture predicts that if the canonical line bundle $K_X$ over a projective manifold $X$ is nef, then it must be semi-ample, i.e., a sufficiently large power of $K_X$ is globally generated or base point free. The deep and subtle relationship between these two notations of positivity in algebraic geometry is also reflected in the canonical metric structures of the underlying K\"ahler manifolds.

In this paper, we focus on the normalized K\"ahler-Ricci flow on an $n$-dimensional K\"ahler manifold $X$ with semi-ample canonical line bundle defined by
\begin{eqnarray}\label{UKRF}
\frac{\partial\omega}{\partial t}=-Ric(\omega)-\omega,\qquad \omega(0)=\omega_0.
\end{eqnarray}
Let $f:X\to X_{can}\subset\mathbb{P}^N$ be the canonical map induced by the pluricanonical system $|mK_X|$ for sufficiently large $m\in\mathbb{Z}^+,$ where $X_{can}$ is the canonical model of $X.$ We assume that the Kodaira dimension $\kappa$ belongs to $\{1,2,\cdots,n-1\}$ and so $X$ is an algebraic fiber over space over $X_{can}.$ Let
$$X_{can}^o=\{y\in X_{can}|~y~\mbox{is~a~nonsingular~fiber~and}~X_y=f^{-1}(y)~\text{is~a nonsingular~fiber}\}$$
and $X^o=f^{-1}(X_{can}^o)$, $D=X_{can}\setminus X_{can}^o$.

It's shown by Song-Tian \cite{ST07,ST12} that $\omega(t)$ collapses nonsingular Calabi-Yau fibers and the flow converges weakly to a generalized K\"ahler-Ricci metric $\omega_{can}$ on its canonical model $X_{can}$, with $\omega_{can}$ is smooth and satisfies the generalized Einstein equation on $X_{can}^o$
$$Ric(\omega_{can})=-\omega_{can}+\omega_{WP},$$
where $\omega_{WP}$ is Weil-Petersson metric induced by the Calabi-Yau fibration $f$. They also proved the $C^0$-convergence on the potential level \cite{ST12} and in the case when $X$ is an elliptic surface the $C_{loc}^{1,\alpha}$-convergence of potentials on $X\setminus S$ for any $\alpha<1$ \cite{ST07}. The canonical metric $\omega_{can}$ extends to a unique twisted K\"ahler-Einstein current with bounded local potentials and $\omega_{WP}$ also extends to a closed positive current on $X_{can}$ \cite{GS21}. The $C^{1,\alpha}$-convergence of potentials when $X$ is a global submersion over $X_{can}$, as well as Gromov-Hausdorff convergence in a special case, was proved by Gross-Tosatti-Zhang \cite{GTZ13,GTZ16}, Hein-Tosatti \cite{HeTo15} and Fong-Zhang \cite{FoZh15}. In \cite{TWY18}, Tosatti-Weinkove-Yang improved the estimate and showed that the metric $\omega(t)$ converges to $f^*\omega_{can}$ in the $C_{loc}^{0}$ on $X_{can}^o$. It was show in \cite{CLee23} that $\omega(t)\to f^*\omega_{can}$ in $C_{loc}^{\alpha}(X_{can}^o)$ as $t\to\infty$ for any $\alpha<1$. Moreover, Tosatti-Weinkove-Yang \cite{TWY18} proved that the restricted metric $\omega(t)|X_y$ converges (up to scalings) in the $C^0$-topology to the unique Ricci flat metric in the class $[\omega_0|X_y]$ on the fibre $X_y$ for any regular value $y;$ this result is improved to be smooth convergence by Tosatti-Zhang in \cite{TosZh15}. A clearer and more unified exposition can also be found in Tosatti's note \cite[Section 5]{To18}.

Tian-Zhang \cite{TZhaZ16} applied their $L^4$-norm of Ricci curvature to show that on any minimal threefold of general type, the normalized K\"ahler-Ricci flow converges to the unique generalized K\"ahler-Einstein metric on the canonical model in the Cheeger-Gromov topology. The diameter bound is proved for minimal models of general type in \cite{Wan18}. In general, Fu-Guo-Song \cite{FGS20} proved that $(X_{can}^o,\omega_{can})$ has bounded diameter, and Song-Tian-Zhang \cite{STZ19} established that its metric completion is a compact metric space. In the special case when $dim X=2$ or the general fibre of $X$ over $X_{can}$ is a complex torus, \cite{STZ19} established that the diameter is uniformly bounded, via an application of Tian-Zhang's relative volume comparison \cite{TZhZ21}. In \cite{GTZ20}, Gross-Tosatti-Zhang generalized the results in \cite{STZ19} in the case when $X_{can}$ is smooth and the union of all codimension $1$ irreducible components of $D$ is snc, and assuming also a locally uniform Ricci curvature bound away from the singular fibers of $f$, as in \cite{STZ19}. To apply such a relative volume comparison \cite{TZhZ21}, one needs to obtain a uniform bound for the Ricci curvature in a neighborhood of a nonsingular fibre of $X$ over $X_{can}$. It was recently shown in \cite{HLT24+} that $\omega(t)\to f^*\omega_{can}$ in $C_{loc}^{\infty}(X_{can}^o)$ as $t\to\infty$ and with bounded Ricci curvature away from the singular fibers. In \cite{JS22}, Jian-Song applied Bamler's Harnack inequality for Nash entropy \cite{Bam20} to derive a relative volume comparison for the Ricci flow, eliminating the need for a locally uniform Ricci curvature bound away from the singular fibers. In the same paper, Jian-Song further proved that the the normalized K\"ahler-Ricci flow on a minimal threefold converges sequentially in Gromov-Hausdorff topology to a compact metric space homeomorphic to its canonical model. Then, by incorporating the techniques from \cite{STZ19}, they also established a uniform diameter bound for $X$ along the normalized K\"ahler-Ricci flow \cite{JS22}.

Let us remark that the uniform diameter bound and the existence of subsequential Gromov-Hausdorff limits of $\big(X, \omega(t)\big)$ were recently achieved \cite{GuPhSoSt24-1,ZhZh25-2} under the assumption that $K_X$ is nef. Lastly, Sz\'ekelyhidi \cite{Sz25,Sz25+} has very recently shown that the metric completion of $\big(X_{can}^o,\omega_{can}\big)$ is homeomorphic to $X_{can}$, is a non-collapsed $\mathrm{RCD}(-1,2n)$-space (equipped with the measure $\omega^{\kappa}$), and $X_{can}\setminus X_{can}^o$ has real Hausdorff codimension at least $2$.

In \cite{ST16}, Song-Tian showed that the scalar curvature is uniformly bounded on $X\times[0,\infty)$ along the normalized K\"ahler-Ricci flow \eqref{UKRF}. The case when $X$ is of general type is given by Zhang in \cite{ZhZ09}. Subsequently, Jian \cite{Jian20} further investigated the convergence of the scalar curvature $R(t)$ under the same flow and established that
\begin{equation}\label{rlocalc0}
||R+\kappa||_{C^0(K)}\longrightarrow0
\end{equation}
uniformly on any compact subset $K\subset X_{reg}.$ The uniform scalar curvature and diameter bound for the long-time solution of the K\"ahler-Ricci flow is a natural extension and analogue of Perelman's scalar curvature and diameter estimate \cite{SesT08} for the K\"ahler-Ricci flow of finite time extinction, i.e. the K\"ahler-Ricci flow on Fano manifolds with initial K\"ahler metric in the first Chern class \cite{So14,ZJS25+}. We highlight that the fact that the scalar curvature remains uniformly bounded along the normalized K\"ahler-Ricci flow \eqref{UKRF} plays a significant role in the aforementioned works.

In light of the earlier works cited above and the references therein, a recent result in \cite{LTZ26+} established that, for a compact K\"ahler manifold with semiample canonical bundle, the normalized K\"ahler-Ricci flow converges in the Gromov-Hausdorff topology to the metric completion of the generalized K\"ahler-Einstein metric on the canonical model, as conjectured by Song-Tian's analytic mimimal model program.


\emph{There is a folklore question that the scalar curvature $R(t)$ of $\omega(t)$ should converge (pointwise) to negative Kodaira dimension of $X$.}

In this paper, we prove the following
\begin{theorem}\label{scalar-curvature-convergence}
Let $(X,\omega_0)$ be a K\"ahler manifold with semi-ample canonical line bundle, and suppose the Kodaira dimension satisfies $\kappa\in\{0,1,2,\cdots,n-1\}$. Assume that $\omega(t)$ the long-time solution of the normalized K\"ahler-Ricci flow \emph{(\ref{UKRF})}, then we have
\begin{equation}
\lim_{t\to\infty} ||R(t)+\kappa||_{L^{\infty}}=0.
\end{equation}
\end{theorem}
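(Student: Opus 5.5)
The plan is to upgrade the local convergence \eqref{rlocalc0} of Jian \cite{Jian20} on compact subsets of $X^o$ to global $L^\infty$ convergence. The tools are the uniform scalar curvature bound of Song-Tian \cite{ST16}, the uniform Sobolev inequality (equivalently the uniform $\mu$-entropy bound) along \eqref{UKRF} promised in the abstract, and a parabolic Moser iteration.

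\textbf{Step 1: a uniform Sobolev inequality.} First I would establish the uniform Sobolev inequality. For all $t\ge0$ and $u\in W^{1,2}(X)$,
\begin{equation*}
\Big(\int_X |u|^{\frac{2m}{m-2}}\,\omega(t)^n\Big)^{\frac{m-2}{m}}\le C\, e^{(n-\kappa)t\cdot\frac{2}{m}}\int_X\big(|\nabla u|^2_{\omega(t)}+u^2\big)\,\omega(t)^n ,
\end{equation*}
with a suitable dimension $m$. The factor $e^{(n-\kappa)t}$ normalises the volume, since $\mathrm{Vol}(\omega(t))\sim e^{-(n-\kappa)t}$. Because the flow collapses, this is best phrased through Perelman-type $\mu$-entropy lower bounds on time intervals $[t-1,t]$, using the scalar bound $|R|\le C$, the diameter bound \cite{GuPhSoSt24-1,ZhZh25-2}, and Bamler's Nash entropy estimates \cite{Bam20}. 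From these one obtains a uniform log-Sobolev inequality with respect to the normalised measure $e^{(n-\kappa)t}\omega(t)^n$, which yields heat kernel upper bounds and hence a Sobolev inequality with uniform constants.

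\textbf{Step 2: a parabolic equation for $u=R+\kappa$.} Next I consider $u=R+\kappa$. Along \eqref{UKRF} it satisfies
\begin{equation*}
\partial_t R=\Delta R+|Ric|^2+R,
\end{equation*}
so that
\begin{equation*}
(\partial_t-\Delta)(R+\kappa)=\big|Ric+\omega\big|^2-(n-\kappa)-\ldots
\end{equation*}
More usefully, writing $|Ric|^2=|Ric^{\circ}+\tfrac{R}{n}\omega|^2$ is not sharp here. Instead I would use the Song-Tian reference form $\chi=f^*\omega_{\rm FS}/m$ and the decomposition $Ric(\omega)=-\chi+\sqrt{-1}\partial\bar\partial(\cdot)$, writing
\begin{equation*}
R+\kappa=-(\mathrm{tr}_\omega\chi-\kappa)+\Delta\dot\varphi\text{-type terms}.
\end{equation*}
The point is that $\mathrm{tr}_\omega\chi\to\kappa$ in integral sense: $\int(\mathrm{tr}_\omega\chi)\,\omega^n/\mathrm{Vol}\to\kappa$ by cohomology. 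Together with $\mathrm{tr}_\omega\chi\le C$ and the local convergence \eqref{rlocalc0}, this gives $\|R+\kappa\|_{L^p(d\bar\mu_t)}\to0$ for every $p<\infty$, where $d\bar\mu_t$ is the normalised measure. Indeed $u$ is bounded, tends to $0$ on compacts of $X^o$, and the normalised measure of a tubular neighbourhood of $f^{-1}(D)$ is uniformly small, since it is comparable to the $\omega_{can}^\kappa$-measure of a neighbourhood of $D$, and $D$ is a proper analytic set.

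\textbf{Step 3: Moser iteration.} Finally I run parabolic Moser iteration for $u_\pm$ on parabolic cylinders $X\times[t-1,t]$, using the uniform Sobolev inequality of Step 1. The reaction term needs care: I would bound $|Ric|^2$ from below by $R^2/n$ for the lower bound of $R$. For the upper bound I would use the subsolution property of $\mathrm{tr}_\omega\chi$ or of the Song-Tian quantity $-\dot\varphi$-type combinations, which satisfy $(\partial_t-\Delta)H\le C$ up to small errors. Iteration then gives $\sup_{X\times\{t\}}|u|\le C\,\|u\|_{L^2(\bar\mu)}^{\theta}+o(1)$, and this tends to $0$ by Step 2.

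The main obstacle is Step 1. It requires producing uniform Sobolev/entropy control for a collapsing flow with singular fibres, which the abstract identifies as the key new input. A secondary difficulty is arranging an equation for $R+\kappa$ whose error terms are small in $L^p$, so that the iteration closes.
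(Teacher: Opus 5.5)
Your Steps 1 and 2 coincide with the paper. Step 1 is Theorem~\ref{ZhZh22}: Bamler's Nash-entropy estimates plus the diameter bound give $\mu(g(t),\tau)\ge\log\mathrm{vol}_{g(t)}(X)-C$, and hence the averaged Sobolev inequality. Step 2 is Lemma~\ref{lpto0}: Jian's local convergence plus $e^{(n-\kappa)t}\omega(t)^n=e^{u}\Omega\le C\Omega$, which is the clean way to see that neighbourhoods of the singular fibres carry small normalized measure. All the work is in Step 3, and there the argument does not close.

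\textbf{Lower bound.} The inequality $|Ric|^2\ge R^2/n$ only gives $(\partial_t-\Delta)\big(-(R+\kappa)\big)\le -R(R+n)/n$, and near $R=-\kappa$ the right side is about $\kappa(n-\kappa)/n>0$. So $(R+\kappa)_-$ is a subsolution only up to a fixed positive forcing. Iterating on unit cylinders $X\times[t-1,t]$ therefore leaves an $O(1)$ error, not the $o(1)$ you assert. You need time intervals of length $\delta\to0$, balancing the $\delta^{-n}$ smoothing cost against $\fint|R+\kappa|\to0$. The paper does exactly this:
it passes to the unnormalized flow $\tilde g(s)=(1+2s)g(t_0+\log(1+2s))$, where $(\partial_s-\tilde\Delta)\tilde R=2|\widetilde{Ric}|^2\ge0$ holds exactly; it evolves $(R(t_0)+\kappa)_-$ by the heat equation; it bounds the solution by $Cs^{-n}\fint|R(t_0)+\kappa|$ via the ultracontractivity Theorem~\ref{ZhZl21}; and it applies the maximum principle to $\tilde R+u$. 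Rescaling back costs only the factor $1+2s$.

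\textbf{Upper bound.} This is the genuine gap. $(R+\kappa)_+$ is not a subsolution, since $(\partial_t-\Delta)R=|Ric|^2+R$. A Moser sup bound would need the forcing in $L^q$ with $q>n+1$. But $|Ric|^2$ is only controlled in average $L^1$ and in the scale-critical Morrey form $\fint_{B(x,r,t)}|Ric|^2\le Cr^{-2}$ of Lemma~\ref{Hessian-estimates-1}.

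Your fallback exhibits no $H$ with $R+\kappa\le H$, $\fint|H|\to0$ and $(\partial_t-\Delta)H\le o(1)$, and the candidates you mention cannot supply one. We have $R+\kappa=-\partial_t u$ with $\partial_t u=\Delta u+\mathrm{tr}_\omega\chi-\kappa$. So any combination of $\mathrm{tr}_\omega\chi$, $\dot\varphi$, $\Delta u$ that genuinely controls $R$ carries the same $|Ric|^2$ forcing. The pieces separately do not tend to constants: on $X^o$, $\mathrm{tr}_\omega\chi\to\mathrm{tr}_{\omega_{can}}\chi$, which is non-constant in general. And a bound $(\partial_t-\Delta)H\le C$ with fixed $C$ again gives only $O(1)$.

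The paper instead proves the lower bound first. For $v=\tilde R-\inf\tilde R(0)\ge0$ it then writes the Duhamel formula $v=h+2\int_0^\tau\!\int|\widetilde{Ric}|^2H$. The term $h$ is small by ultracontractivity, because $\fint v(0)\to0$ by Lemma~\ref{lpto0} and Proposition~\ref{slowerbd}. The Ricci term is estimated with Gaussian heat-kernel bounds against Lemma~\ref{Hessian-estimates-1}.

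This last step is itself delicate. Since $\int_XH(x,s;y,\tau)\,d\tilde g_s(x)=1$ while $\mathrm{vol}_{\omega(t_0)}(X)\to0$, any Gaussian upper bound for $H$ must carry a factor of order $\mathrm{vol}^{-1}$. In Bamler--Zhang the constant depends on the entropy lower bound $\mu\ge-A$, and here $A\approx-\log\mathrm{vol}$. With that factor, the $Cr^{-2}$ Morrey bound yields only $F(s;y,\tau)\lesssim(\tau-s)^{-1}$, which is neither small nor integrable in $s$. The decaying factor $\mathrm{vol}_{\omega(t_0)}(X)$ in Step 5 of the proof of Proposition~\ref{upper-bd-scalar} does not survive this. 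So controlling the $|Ric|^2$ forcing near the singular fibres needs an additional input, for example a Morrey bound with decay $r^{-2+\alpha}$, whichever route you take.
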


The proof of Theorem \ref{scalar-curvature-convergence} relies on the following uniform Sobolev inequality along the normalized K\"ahler-Ricci flow \eqref{UKRF} and the ultracontractivity property of Ricci flow (see Theorem \ref{ZhZl21}). The detailed proof of Theorem \ref{scalar-curvature-convergence} is provided in Section \ref{Proof of Theorem scalar-curvature-convergence}.

The Sobolev inequality is of evident importance because it contains a wealth of analytic and geometric information (e.g., volume noncollapsing and isoperimetric inequalities). Hence, achieving uniform constants in the Sobolev inequality is highly desirable. As a case in point, Perelman established a uniform Sobolev inequality for the K\"ahler-Ricci flow on Fano manifolds, see \cite{SesT08}. By applying Perelman's monotonicity formula, Zhang \cite{ZhQ16} proved a uniform Sobolev inequality for compact Ricci flows. We refer the reader to \cite[Section 6.2]{ZhQ16} for more information. Chan-Ma-Zhang further developed this idea, proving a uniform Sobolev inequality for ancient Ricci flows \cite{CMZ22} and a local Sobolev inequality for Ricci flows \cite{CMZ23}. Meanwhile, Li-Wang \cite{LuWa20}, using a distinct method, proved a uniform Sobolev inequality on shrinking gradient Ricci solitons.

In this paper, we also established a uniform Sobolev inequality for the normalized K\"ahler-Ricci flow \eqref{UKRF} on K\"ahler manifolds with semi-ample canonical line bundle.

\begin{theorem}\label{Sobolev-inequality-NKRF}
Let $(X,\omega_0)$ be a K\"ahler manifold with semi-ample canonical line bundle, then along the normalized K\"ahler-Ricci flow \eqref{UKRF}, we have the uniform entropy estimate
\begin{align}
\mu(g(t),\tau)\geq\log vol_{g(t)}(X,g(t))-C,\quad\forall~0<\tau\leq1,
\end{align}
and the uniform Sobolev inequality
\begin{align}
\Big(\fint_X|f|^{\frac{2n}{n-1}}\omega(t)^n\Big)^{\frac{n-1}{n}}\leq C_S\cdot\fint_X\big(|\nabla f|^2+f^2\big)\omega(t)^n,\quad f\in C^{\infty}(X,\mathbb{R}),
\end{align}
where $\fint$ denotes the average integration w.r.t. the measure $\omega(t)^n$. Here $C,~C_S$ are uniform constants depending only on $\omega_0,n$.
\end{theorem}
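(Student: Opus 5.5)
The plan is to get the entropy bound by a rescaling argument plus Perelman monotonicity, and then read off the Sobolev inequality from the entropy bound by Davies' log-Sobolev-to-heat-kernel machinery.

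\textbf{Reduction to the unnormalized flow.} Write $\omega(t)=e^{-t}\tilde\omega(e^t-1)$, where $\tilde\omega$ solves (\ref{KRF}). Since $\mu(g,\tau)$ is scale invariant under $(g,\tau)\mapsto(\lambda g,\lambda\tau)$, we get
$$\mu(g(t),\tau)=\mu\big(\tilde g(s),e^{t}\tau\big),\qquad s=e^t-1.$$
Note that $e^t\tau\le e^t\approx s+1$. Perelman's monotonicity gives $\mu(\tilde g(s),\sigma)\ge\mu(\tilde g(s'),\sigma+s-s')$ for $s'\le s$.

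\textbf{Entropy bound.} Fix $t\ge1$ and $\tau\in(0,1]$. Go back in time to $s'=s-(1-\tau)e^t$, which in normalized time is $t'\approx t-1$. This gives
$$\mu(\tilde g(s),e^t\tau)\ge\mu(\tilde g(s'),e^t)=\mu\big(g(t'),\,e^{t-t'}\big),$$
so the rescaled parameter is of order one. We therefore only need
$$\mu(g(t'),\tau')\ge \log \mathrm{vol}_{g(t')}-C\qquad\text{for }\tau'\in[1,e].$$

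To get this, write $\omega(t')$ as $\hat\omega_{t'}+i\partial\bar\partial\varphi$, with $\hat\omega_{t'}=e^{-t'}\omega_0+(1-e^{-t'})\chi$ and $\chi=f^*\omega_{FS}/m$. Use the known estimates of Song--Tian: $|\varphi|\le C$, $|\dot\varphi|\le C$, and $|R|\le C$ from \cite{ST16}. Also $\omega^n=e^{\dot\varphi+\varphi}e^{-(n-\kappa)t'}\Omega$, so the volume form is uniformly comparable to $e^{-(n-\kappa)t'}\Omega$ with $\log\mathrm{vol}\approx -(n-\kappa)t'$.

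In the $\mathcal W$-functional, the terms $\tau'\int R u^2$ and $\int u^2\log u^2$ are handled as follows. Rewrite the entropy against the fixed measure $\Omega$. Since the density of $\omega^n$ against $\Omega$ is $e^{O(1)}e^{-(n-\kappa)t'}$, the normalization $\int u^2\omega^n=1$ turns $-\int u^2\log u^2$ into $(n-\kappa)t'$ plus the entropy relative to $\Omega$, up to $O(1)$. The bounded-$R$ term costs $O(1)$.

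What remains is a log-Sobolev inequality
$$\int v^2\log v^2\,\Omega\le C\int|\nabla v|^2_{\omega}\,\Omega+C\qquad\text{for }\int v^2\Omega=1.$$
Since $\omega\le C\omega_0$ by the Schwarz-lemma type bound $\mathrm{tr}_{\omega_0}\omega\le C$ (\cite{ST12}), we have $|\nabla v|^2_\omega\ge c|\nabla v|^2_{\omega_0}$. The log-Sobolev inequality for the fixed metric $\omega_0$ then gives the bound. This yields the entropy estimate for $t\ge1$; for $t\le 1$ the estimate is trivial by compactness.

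\textbf{Sobolev inequality.} From the entropy bound, $\mu(g(t),\tau)-\log\mathrm{vol}\ge -C$ for all $\tau\le1$ is a family of normalized log-Sobolev inequalities. In averaged form,
$$\fint u^2\log u^2\le \tau\fint(4|\nabla u|^2+Ru^2)-n\log\tau+C.$$
Using $|R|\le C$ this holds uniformly for $\tau\in(0,1]$. By the standard Davies argument, as done in \cite{ZhQ16}, this gives ultracontractivity of $e^{s(\Delta-1)}$ for the averaged measure, $\|\cdot\|_{1\to\infty}\le Cs^{-n}$ for $s\le1$. Varopoulos' theorem then yields the Sobolev inequality with exponent $2n/(n-1)$ (real dimension $2n$) and a uniform $C_S$.

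\textbf{Main obstacle.} The main obstacle is the entropy estimate, specifically controlling $\int u^2\log u^2$ uniformly as the flow collapses. The step I expect to need care is making the comparison $|\nabla v|_\omega\gtrsim|\nabla v|_{\omega_0}$ compatible with the measure change. If the crude $\omega\le C\omega_0$ is insufficient, the first fallback is to use a weighted log-Sobolev on $(X,\omega_0,\Omega)$ with bounded density.
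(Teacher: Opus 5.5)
There is a genuine gap in the log-Sobolev step: the bound $\mathrm{tr}_{\omega_0}\omega(t)\le C$ is not available. The Song--Tian parabolic Schwarz lemma gives $\mathrm{tr}_{\omega(t)}\chi\le C$ (see \eqref{stc1}), i.e.\ $\chi\le C\omega(t)$, which is the opposite inequality.

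A uniform upper bound $\omega(t)\le C\omega_0$ is in fact false once $f$ has singular fibres or $X_{can}$ is singular. We know $\omega(t)\to f^*\omega_{can}$ in $C^0_{loc}(X^o)$, and $f^*\omega_{can}$ is unbounded with respect to $\omega_0$ near $f^{-1}(D)$. Take a properly elliptic surface with an $I_1$ fibre over $z=0$. The fibre integral of $\Omega$ behaves like $-\log|z|^2\,\sqrt{-1}dz\wedge d\bar z$. Since $\omega_{can}$ has bounded potential, its density on the base is also $\sim-\log|z|^2$, so $\sup_X\mathrm{tr}_{\omega_0}\omega(t)\to\infty$. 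The same happens for $\kappa=n$ when $(-2)$-curves are contracted to orbifold points.

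Consequently $|\nabla v|^2_{\omega}\ge c|\nabla v|^2_{\omega_0}$ fails in the base directions near the discriminant, and the transfer of the $\omega_0$ log-Sobolev inequality breaks down. Your fallback (a weighted inequality on $(X,\omega_0,\Omega)$ with bounded density) handles only the change of measure, not this change of metric. So the core log-Sobolev inequality for $(X,\omega(t),\Omega)$ is left unproved. Your argument does work in special cases ($\kappa=0$, $K_X$ ample, or $f$ a submersion onto a smooth base), but not under the theorem's hypotheses.

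What is missing is a global geometric input that survives the degeneration near singular fibres; the paper uses the Jian--Song diameter bound \eqref{diameter-bound-NKRF}. After rescaling to a Ricci flow $\tilde g$ on $[0,1]$, it bounds $\mu(\tilde g(1),\tau)$ from below by the Nash entropy at $s=0$ of the conjugate heat flow started from a minimizer. Via Jensen this reduces to an average of the pointed entropies $\mathcal N_{y,1}(0)$. These are compared across points by Bamler's Harnack inequality, at a cost of $\sqrt{2n}\,\mathrm{diam}$. Bamler's noninflating estimate, applied with $A$ of order the diameter, then gives $\mathcal N_{x,1}(0)\ge\log\mathrm{vol}-C$.

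Your Sobolev step (Davies ultracontractivity followed by Varopoulos) matches the paper's Step 4 and is fine.

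Your time reduction is also miscomputed, though this is easily repaired. With $s'=s-(1-\tau)e^t$ one gets $e^{t'}=\tau e^t$ and rescaled parameter $e^{t-t'}=1/\tau$, which is unbounded as $\tau\to0$; moreover $s'<0$ once $\tau<e^{-t}$. Going back a fixed normalized time $t'=t-1$ instead gives $\mu(g(t),\tau)\ge\mu(g(t-1),e\tau+e-1)$, with parameter in $[e-1,2e-1]$, which is what you intended.
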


The proof of Theorem \ref{Sobolev-inequality-NKRF} draws on the recent results of Bamler \cite{Bam20} concerning estimates of Nash entropy, together with the work of Chan-Ma-Zhang \cite{CMZ23}. A more detailed discussion can be found in Section \ref{mu-entropy and Sobolev inequality along KRF}.

\section{Basic facts and estimates}
Let $(X,\omega_0)$ be a K\"ahler manifold with $K_X$ being semiample and $\omega(t)$ be the global solution of the normalized K\"ahler-Ricci flow (\ref{UKRF}). Now we will reduce the normalized K\"ahler-Ricci flow (\ref{UKRF}) to a parabolic Monge-Amp\`{e}re equation. Let $\mathcal{O}_{\mathbb{P}^N}(1)$ be the hyperplane bundle over $\mathbb{P}^N$ and $\omega_{FS}\in[\mathcal{O}_{\mathbb{P}^N}(1)]$ be a Fubini-Study metric on $\mathbb{P}^N.$ Then there exists $m>0$ such that
$$K_X=\frac{1}{m}f^*\mathcal{O}_{\mathbb{P}^N}(1).$$
Let $\chi=\frac{1}{m}f^*\omega_{FS}\in-2\pi c_1(X)$ and $\chi$ is a smooth nonnegative closed $(1,1)$-form on $X.$ There also exists a smooth volume form $\Omega$ on $X$ such that
$$-Ric(\Omega)=\sqrt{-1}\partial\bar{\partial}\log\Omega=\chi.$$
Then K\"ahler class evolving along the normalized
K\"ahler-Ricci flow is given by
$$[\omega(t)]=e^{-t}[\omega_0]-(1-e^{-t})c_1(X)$$
We define the reference metric
$$\omega_t=e^{-t}\omega_0+(1-e^{-t})\chi.$$
Then normalized K\"ahler-Ricci flow \eqref{UKRF} is equivalent to the following Monge-Amp\`{e}re equation.
\begin{equation}\label{npma}
  \begin{cases}
  \frac{\partial\varphi}{\partial t}&=\log\frac{e^{(n-\kappa)t}(\omega_t+\sqrt{-1}\partial\bar{\partial}\varphi)^n}{\Omega}-\varphi,\\
  \omega(t)&=\omega_t+\sqrt{-1}\partial\bar{\partial}\varphi>0,\\
  \varphi(0)&=0.
  \end{cases}
\end{equation}

Let $u=\frac{\partial\varphi}{\partial t}+\varphi$, which is referred to as the Ricci potential. A straightforward calculation yields the following lemma.

\begin{lemma}[\cite{ST16}]
Under the normalized K\"ahler-Ricci flow \eqref{UKRF}, we have
\begin{align}
\frac{\partial}{\partial t}u&=\Delta u+tr_{\omega(t)}\chi-\kappa\\
Ric(\omega(t))&=-\sqrt{-1}\partial\bar{\partial}u-\chi
\end{align}
\end{lemma}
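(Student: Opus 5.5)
The plan is to verify both identities directly from the parabolic Monge--Amp\`ere reformulation \eqref{npma}, with $u=\dot\varphi+\varphi$, where $\dot\varphi=\partial\varphi/\partial t$. I would prove the Ricci identity first and then use the equation again for the evolution of $u$. All of this is local computation on $X$, since $\omega(t)$ is smooth and positive for every $t$.

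\emph{Ricci identity.} Rewrite the first line of \eqref{npma} as
\begin{equation*}
\log\omega(t)^n=\dot\varphi+\varphi+\log\Omega-(n-\kappa)t=u+\log\Omega-(n-\kappa)t .
\end{equation*}
Now apply $-\sqrt{-1}\partial\bar\partial$ and use $Ric(\omega)=-\sqrt{-1}\partial\bar\partial\log\omega^n$. The term $(n-\kappa)t$ is spatially constant, so it drops out. By the choice of $\Omega$ we have $\sqrt{-1}\partial\bar\partial\log\Omega=\chi$. This gives $Ric(\omega(t))=-\sqrt{-1}\partial\bar\partial u-\chi$.

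\emph{Evolution of $u$.} Since $\partial_t u=\ddot\varphi+\dot\varphi$, the task is to compute $\ddot\varphi$. Differentiate the equation for $\dot\varphi$ in $t$, using $\partial_t\log\omega^n=tr_\omega(\partial_t\omega)$:
\begin{equation*}
\ddot\varphi=tr_{\omega}\big(\partial_t\omega_t+\sqrt{-1}\partial\bar\partial\dot\varphi\big)+(n-\kappa)-\dot\varphi .
\end{equation*}
The key observation is that $\partial_t\omega_t=-e^{-t}\omega_0+e^{-t}\chi=\chi-\omega_t$. We also have $\omega_t=\omega-\sqrt{-1}\partial\bar\partial\varphi$. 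Therefore
\begin{equation*}
tr_\omega(\chi-\omega_t)=tr_\omega\chi-n+\Delta\varphi ,
\end{equation*}
and combining this with $tr_\omega\sqrt{-1}\partial\bar\partial\dot\varphi=\Delta\dot\varphi$ gives
\begin{equation*}
\ddot\varphi=\Delta u+tr_\omega\chi-\kappa-\dot\varphi .
\end{equation*}
Adding $\dot\varphi$ to both sides yields $\partial_t u=\Delta u+tr_{\omega(t)}\chi-\kappa$.

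As a sanity check, the same substitutions give
\begin{equation*}
\partial_t\omega=\chi-\omega+\sqrt{-1}\partial\bar\partial u=-Ric(\omega)-\omega ,
\end{equation*}
which confirms that \eqref{npma} with $u$ defined this way reproduces \eqref{UKRF}.

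There is no real obstacle here. The only points needing care are the bookkeeping of the normalization constant $(n-\kappa)t$, whose derivative cancels the $-n$ coming from $tr_\omega\omega$ and leaves $-\kappa$, and the identity $\partial_t\omega_t=\chi-\omega_t$.
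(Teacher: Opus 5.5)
Your computation is correct, and it is exactly the direct verification from the parabolic Monge--Amp\`ere equation \eqref{npma} that the paper leaves to the reader: the paper only says the lemma follows from ``a straightforward calculation'' and cites Song--Tian. The two points you single out, $\partial_t\omega_t=\chi-\omega_t$ and the cancellation of the constant $n-\kappa$ against $tr_{\omega}\omega=n$, are where all the content of that calculation lies.
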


The parabolic gradient estimate was employed by Perelman to obtain a scalar curvature bound for the K\"ahler-Ricci flow on Fano manifolds \cite{SesT08}. This type of gradient estimate has found further application in the study of K\"ahler manifolds with positive Kodaira dimension \cite{ST07,ST12,ST16,Jian20}. We present below the Song-Tian $C^1$-estimate for Ricci potential \cite{ST16}, together with diameter bound and the volume estimate established by Jian-Song \cite{JS22}.

\begin{lemma}[\cite{ST16,JS22}]
There exists $C$ depending on $n$ and $\omega_0,$ such that on $X\times[0,+\infty),$
\begin{equation}\label{stc1}
|\varphi|+tr_{\omega(t)}\chi+|\frac{\partial\varphi}{\partial t}|+|\nabla u|+|\Delta u|+|R|\leq C,
\end{equation}
\begin{equation}\label{diameter-bound-NKRF}
diam(X,\omega(t))\le C,
\end{equation}
and
\begin{equation}\label{volumenoncollased}
C^{-1} vol_{g(t)}(X)\leq \frac{|B(x,r,t)|_t}{r^{2n}}\leq Cvol_{g(t)}(X)
\end{equation}
where $R$ is the scalar curvature of $\omega(t),~\nabla$ and $\Delta$ are the gradient and Laplace operators with respect to $\omega(t)$, $B(x,r,t)$ is the metric ball center at $x$ with radius $r$ with respect to $\omega(t)$, and $|B(x,r,t)|_t$ is the volume of $B(x,r,t)$ with respect to $\omega(t)$.
\end{lemma}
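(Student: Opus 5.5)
The plan is to first reduce everything to estimates for the parabolic Monge--Amp\`ere equation \eqref{npma} and the Ricci potential $u=\dot\varphi+\varphi$, and then derive the geometric statements (diameter and volume) from these analytic bounds together with Bamler's entropy theory. The starting observation is the volume comparison for the reference metrics. Since $\chi=\frac1m f^*\omega_{FS}$ has rank $\kappa$ generically, we have
\begin{align}
C^{-1}e^{-(n-\kappa)t}\,|s|^{2\beta}\,\Omega\le\omega_t^n\le Ce^{-(n-\kappa)t}\,\Omega .
\end{align}
Here $s$ is a section vanishing on the singular fibres. This is exactly why the factor $e^{(n-\kappa)t}$ appears in \eqref{npma}.

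\textbf{Step 1: bounds on $\varphi$ and $\dot\varphi$.} For the upper bound on $\varphi$, apply the maximum principle at a spatial maximum of $\varphi$, where $\omega(t)^n\le\omega_t^n$, and use the comparison above. The lower bound on $\varphi$ is the Song--Tian $C^0$ estimate. It is obtained by comparing with the solution of an elliptic Monge--Amp\`ere equation on the base, via Ko\l odziej/Eyssidieux--Guedj--Zeriahi type $L^\infty$ estimates with densities in $L^p$, or by a Demailly--Pali style argument. For $\dot\varphi$, I will use the maximum principle applied to combinations such as $(e^t-1)\dot\varphi-\varphi-nt$ and $\dot\varphi+\varphi$. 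These give $\dot\varphi\le C$ and a lower bound, and hence $u$ is bounded.

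\textbf{Step 2: trace bound.} Bound $tr_{\omega(t)}\chi$ by the parabolic Chern--Lu/Schwarz lemma for the holomorphic map $f:(X,\omega(t))\to(\mathbb P^N,\omega_{FS})$. This map has target bisectional curvature bounded above, and the flow gives $(\partial_t-\Delta)\log tr_\omega\chi\le C\,tr_\omega\chi+1$. Since $\Delta\varphi=n-tr_\omega\omega_t\le n-(1-e^{-t})tr_\omega\chi$, the quantity $\log tr_\omega\chi-A\varphi$ with $A$ large satisfies a good differential inequality. Boundedness of $\varphi$ then closes the estimate.

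\textbf{Step 3: Perelman-type gradient and Laplacian estimates for $u$.} This is the main obstacle. Using $\partial_t u=\Delta u+tr_\omega\chi-\kappa$ and $Ric=-\sqrt{-1}\partial\bar\partial u-\chi$, the Bochner formula shows that $|\nabla u|^2$ satisfies
\begin{align}
(\partial_t-\Delta)|\nabla u|^2=-|\nabla\nabla u|^2-|\nabla\bar\nabla u|^2+|\nabla u|^2+2\mathrm{Re}\langle\nabla tr_\omega\chi,\nabla u\rangle .
\end{align}
The cross term involving $\nabla tr_\omega\chi$ is the delicate point. I would handle it by considering $H=\frac{|\nabla u|^2}{A-u}+B\,tr_\omega\chi$. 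The good term $|\nabla\chi|^2$-type quantity from the Chern--Lu computation absorbs the cross term, and the denominator $A-u$ produces the usual $-|\nabla u|^4/(A-u)^3$ gain, as in Song--Tian. A similar quantity $\frac{-\Delta u}{A-u}+B\frac{|\nabla u|^2}{A-u}$ bounds $|\Delta u|$. Then $R=-\Delta u-tr_\omega\chi$ gives the scalar curvature bound.

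\textbf{Step 4: diameter and volume.} With $|R|$ bounded and $vol_{g(t)}(X)=[\omega(t)]^n\sim e^{-(n-\kappa)t}$, I follow Jian--Song and use Bamler's bounds on the Nash entropy. These give a lower bound for the Nash entropy of the form $\log vol_{g(t)}(X)-C$ at scale $1$, which controls the volume ratio of unit balls against the total volume. Combining this with the relative volume comparison under bounded scalar curvature and with the fibre structure away from $D$ yields the diameter bound \eqref{diameter-bound-NKRF}. The upper bound in \eqref{volumenoncollased} then follows from the heat kernel upper bound together with the bounded scalar curvature. The lower bound follows from the noncollapsing at scale $r\le$ diameter, after normalizing by the total volume.
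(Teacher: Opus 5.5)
The paper gives no argument for this lemma beyond citing \cite{ST16,JS22}, so your outline is being measured against those works. Two of your steps do not go through as written.

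\textbf{The lower bound for $\dot\varphi$.} This bound is part of \eqref{stc1}, and it does not come from the maximum principle on the combinations you name. A direct computation gives $(\partial_t-\Delta)(\dot\varphi+\varphi)=tr_{\omega}\chi-\kappa\ge-\kappa$, so the minimum principle only yields $u\ge -C-\kappa t$. Adding multiples of $\varphi$ does not fix this. For $\dot\varphi+A\varphi$ the good term is
$(A-1)\,tr_\omega\omega_t\ge (A-1)n\big(e^{(n-\kappa)t}\omega_t^n/\Omega\big)^{1/n}e^{-u/n}$.
By your own volume comparison, its coefficient is only bounded below by $c|s|^{2\beta/n}$, so you get at best a lower bound on $\dot\varphi$ that degenerates along the singular fibres. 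The missing bound is used downstream in two places. In Step 2, $(\partial_t-\Delta)(-A\varphi)=-A\dot\varphi+An-A\,tr_\omega\omega_t$ contains the term $-A\dot\varphi$; you drop it, and controlling it requires $\dot\varphi\ge -C$. In Step 3, the maximum principle for $\frac{|\nabla u|^2}{A-u}+B\,tr_\omega\chi$ and its Laplacian analogue only gives $|\nabla u|^2\le C(A-u)$ and $-\Delta u\le C(A-u)$. These are uniform bounds only once $u\ge -C$ is known. You need a separate argument for this lower bound.

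\textbf{The upper volume bound in Step 4.} As stated, with no lower restriction on $r$, the upper half of \eqref{volumenoncollased} is false at small scales, so it cannot follow from heat kernel bounds. Take $X=\Sigma\times E$ with $\Sigma$ of genus $\ge 2$ and $E$ elliptic, so $n=2$ and $\kappa=1$. Let $\omega_0=\omega_\Sigma+\omega_E$ with $Ric(\omega_\Sigma)=-\omega_\Sigma$ and $\omega_E$ flat.
\begin{itemize}
\item The solution is $\omega(t)=\omega_\Sigma+e^{-t}\omega_E$, and $vol_{g(t)}(X)\sim e^{-t}$.
\item For $e^{-t/2}\ll r\ll 1$ one has $|B(x,r,t)|_t\sim r^2e^{-t}$.
\item Hence $|B(x,r,t)|_t/(r^4 vol_{g(t)}(X))\sim r^{-2}$, which is unbounded along $r=e^{-t/4}$.
\end{itemize}
This matches what the analysis gives. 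Bamler's non-inflating estimate yields only $|B(x,r,t)|_t\le Ce^{\mathcal N_{x,t}(r^2)}r^{2n}$, and $\mathcal N_{x,t}(r^2)\to 0$ as $r\to 0$ while $\log vol_{g(t)}(X)\approx-(n-\kappa)t$. The bound with the factor $vol_{g(t)}(X)$ holds only for $r$ bounded below, and there it is trivial from $|B|\le vol$.

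\textbf{Circularity in the diameter argument.} The entropy bound $\mathcal N_{x,t}(1)\ge \log vol_{g(t)}(X)-C$ at every $x$ is obtained \emph{from} the diameter bound, via Bamler's Harnack inequality, exactly as in the proof of Theorem \ref{ZhZh22}. Before the diameter is controlled you have it only at points near a regular fibre. The passage from there to \eqref{diameter-bound-NKRF} is the actual content of \cite{JS22}. It combines relative volume comparison at bounded distance with the covering argument of \cite{STZ19}, which uses the small volume of neighbourhoods of $f^{-1}(D)$. Your Step 4 names these ingredients but does not carry out that argument.
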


The following result, which combines Chern-Weil theory and Song-Tian's $C^1$-estimate for Ricci potential \eqref{stc1}, was originally proved by Tian-Zhang \cite[Lemma 3.4]{TZhaZ16}. For the reader's convenience, we present the proof below.

\begin{lemma}{\rm(\cite[Lemma 3.4]{TZhaZ16})}
There exists $C=C(\omega_0,n)$ such that
\begin{equation}\label{L2Ricbd}
\int_X(|\nabla\nabla u|^2+|\nabla\bar{\nabla}u|^2+|Ric|^2+|Rm|^2)dvol_{g(t)}\leq C,~~for~all~t\geq0.
\end{equation}
\end{lemma}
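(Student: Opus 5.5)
Fix $t$ and write $\omega=\omega(t)$. The plan is a Bochner/Chern–Weil computation: integrate the Bochner formula for $\nabla\bar\nabla u$ and $\nabla\nabla u$ against $\omega^n$, use the pointwise bounds of \eqref{stc1} to control every term that comes up, and reduce $\int|Rm|^2$ to $\int|Ric|^2$ plus Chern numbers.

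\textbf{Step 1: the mixed Hessian.} From the identity $Ric(\omega)=-\sqrt{-1}\partial\bar\partial u-\chi$ we get $R=-\Delta u-tr_\omega\chi$. Integration by parts on the compact manifold $X$ gives
$$\int_X|\nabla\bar\nabla u|^2\omega^n=\int_X(\Delta u)^2\omega^n.$$
This holds because the commutator term for $u_{i\bar j}$ in that direction carries no curvature: we have $\int u_{i\bar j}u_{\bar i j}=\int u_{i\bar i}u_{j\bar j}$ after swapping derivatives on functions. Since $|\Delta u|\le C$ by \eqref{stc1}, this integral is bounded by $C\,vol_{g(t)}(X)$. The volume is $\int_X\omega_t^n$, which is uniformly bounded in $t$, so $\int|\nabla\bar\nabla u|^2\le C$.

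\textbf{Step 2: the pure Hessian.} I would integrate by parts twice:
$$\int_X|\nabla\nabla u|^2=\int_X u_{ij}u_{\bar i\bar j}=-\int_X u_i u_{\bar i\bar j j}.$$
Commuting derivatives gives $u_{\bar i\bar j j}=u_{\bar i j\bar j}+R_{\bar i l}$-type terms, so
$$\int|\nabla\nabla u|^2=\int|\nabla\bar\nabla u|^2\;(\text{or }\textstyle\int(\Delta u)^2)\;-\int Ric(\nabla u,\overline{\nabla u}).$$
Now substitute $Ric=-\nabla\bar\nabla u-\chi$. The curvature term becomes $\int (u_{i\bar j}+\chi_{i\bar j})u_{\bar i}u_j$. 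Using $|\nabla u|\le C$, $0\le\chi\le tr_\omega\chi\cdot\omega\le C\omega$, and Cauchy–Schwarz against Step 1, this is bounded by $C$.

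\textbf{Step 3: Ricci curvature.} We have $|Ric|^2\le 2|\nabla\bar\nabla u|^2+2|\chi|^2_\omega$ and $|\chi|_\omega\le tr_\omega\chi\le C$, since $\chi\ge0$. Together with Step 1 this gives $\int|Ric|^2\le C$.

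\textbf{Step 4: full curvature (main obstacle).} This is the step where the argument can fail, because $|Rm|$ is not pointwise controlled. I would use Chern–Weil theory: the form $(|Rm|^2-|Ric|^2)\omega^n$ differs from $(|Ric|^2-R^2)\omega^n$-type terms by a universal multiple of $c_2(X)\wedge[\omega]^{n-2}$ plus $c_1^2(X)\wedge[\omega]^{n-2}$. Precisely, there is the identity
$$\int(|Rm|^2-4|Ric_0|^2+cR^2)\,\omega^n=C_n\int(2c_2-c_1^2)\wedge\omega^{n-2},$$
up to normalising constants. The right-hand side is a topological pairing with the class $[\omega(t)]=e^{-t}[\omega_0]+(1-e^{-t})[\chi]$. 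This class stays in a bounded set of cohomology, so the right-hand side is bounded uniformly in $t$. The terms $\int R^2$ and $\int|Ric|^2$ are already bounded by \eqref{stc1} and Step 3. This bounds $\int|Rm|^2$.

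The delicate point is to get the signs and constants in the Chern–Weil identity right for K\"ahler metrics. I would derive them directly from $\mathrm{tr}(\Theta\wedge\Theta)\wedge\omega^{n-2}$ and $(\mathrm{tr}\,\Theta)^2\wedge\omega^{n-2}$, using the standard expansion that relates $\alpha\wedge\beta\wedge\omega^{n-2}$ to $\operatorname{tr}\alpha\operatorname{tr}\beta-\langle\alpha,\beta\rangle$.
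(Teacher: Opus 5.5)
Your proposal is correct and follows the paper's proof essentially step for step: the same integrations by parts giving $\int|\nabla\bar\nabla u|^2=\int(\Delta u)^2$ and $\int|\nabla\nabla u|^2=\int(\Delta u)^2-\int Ric(\nabla u,\bar\nabla u)$, the $L^2$ bound on $Ric$ from $Ric=-\sqrt{-1}\partial\bar\partial u-\chi$ with $0\le\chi\le C\omega(t)$, and a Chern--Weil identity that bounds $\int|Rm|^2$ by $\int|Ric|^2$, $\int R^2$ and a bounded cohomological pairing. Your K\"ahler-form version of the last step (in complex norms it reads $\int_X(|Rm|^2-|Ric|^2)\omega^n=a_n\int_X(2c_2(X)-c_1(X)^2)\wedge[\omega(t)]^{n-2}$ for a dimensional constant $a_n>0$; the exact coefficients do not matter, only that $|Rm|^2$ enters with a nonzero coefficient) is equivalent to the paper's Weyl-tensor formula, and pairing with the bounded class $[\omega(t)]^{n-2}$, as you do, is the correct reading of the paper's $c_2\wedge c_1^{n-2}$.
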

\begin{proof}
The $L^2$-bound of $\nabla\bar{\nabla}u$ follows from
\begin{align*}
\int_X|\nabla\bar{\nabla}u|^2&=-\int_X\bar{\nabla}u\nabla\bar{\nabla}\nabla u\\
&=-\int_X\bar{\nabla}u\nabla\nabla\bar{\nabla}u\\
&=\int_X(\Delta u)^2=\int_X(R+tr_{\omega(t)}\chi)^2\leq C(\omega_0).
\end{align*}
The $L^2$-bound of $\nabla\nabla u$ follows from an integration by parts:
\begin{align*}
\int_X|\nabla\nabla u|^2&=-\int_X\nabla u\nabla\bar{\nabla}\bar{\nabla}u\\
&=\int_X(\Delta u)^2-Ric(\nabla u,\bar{\nabla}u)\\
&=\int_X(\Delta u)^2+\langle\sqrt{-1}\partial\bar{\partial}u+\chi,\nabla u\otimes\bar{\nabla}u\rangle\leq C(\omega_0).
\end{align*}
The $L^2$-bound of the Riemann curvature tensor follows from the Chern-Weil theory. Denote the $i$th Chern class by $c_i.$ Let $W$ be the Weyl tensor and define $U$ and $Z$ as
$$U=\frac{R}{2n(2n-1)}g\odot g~~\emph{and}~~Z=\frac{1}{2n-2}(Ric-\frac{R}{n}g)\odot g,$$
where $R$ is the scalar curvature of $g$ and $\odot$ is the Kulkarni-Nomizu product. Then we have the general formula
$$\int c_2\wedge c_1^{n-2}=\frac{(n-2)!}{2(2\pi)^n)}\int((2n-3)(n-1)|U|^2-(2n-3)|Z|^2+|W|^2).$$
The $L^2$-norms of $Z$ and $U$ are uniformly bounded. Since the left-hand side of the above formula is a topological invariant, the $L^2$-norm of the Weyl tensor is uniformly bounded, and this in turn gives the uniform $L^2$-bound of the total curvature tensor.
\end{proof}

In what follows, we state and prove an integral estimate for the Hessian of the Ricci potential, which essentially means that it is a critical quantity that yields an upper bound on the scalar curvature along the normalized K\"ahler-Ricci flow.

\begin{lemma}\label{Hessian-estimates-1}
Let $u$ be the Ricci potential. We have
\begin{equation}
\fint_{B(x,r,t)}|Hess u|^2dg_t\le C\cdot r^{-2}
\end{equation}
and
\begin{equation}\label{average-integral-Ricci-estimates}
\fint_{B(x,r,t)}|Ric|^2dg_t\le C\cdot r^{-2}
\end{equation}
for some uniform constant $C=C(\omega_0,n)$.
\end{lemma}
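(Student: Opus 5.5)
Since $Ric(\omega(t))=-\sqrt{-1}\partial\bar\partial u-\chi$ with $|\chi|_{\omega(t)}\le tr_{\omega(t)}\chi\le C$ by \eqref{stc1}, we have $|Ric|^2\le 2|\nabla\bar\nabla u|^2+C$. So \eqref{average-integral-Ricci-estimates} follows from the Hessian bound, using $r\le \mathrm{diam}\le C$ to absorb the constant into $Cr^{-2}$. The work is therefore a localized Bochner-type integration by parts for $u$, handling the $\nabla\bar\nabla u$ and $\nabla\nabla u$ parts separately.

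We localize with a cutoff $\phi$ supported in $B(x,2r,t)$, with $\phi\equiv1$ on $B(x,r,t)$ and $|\nabla\phi|\le C/r$, built from the distance function at fixed time $t$ (Lipschitz suffices). For the mixed part we integrate by parts as in the proof of \eqref{L2Ricbd}, now with $\phi^2$:
$$\int\phi^2|\nabla\bar\nabla u|^2=\int\phi^2(\Delta u)^2+O\Big(\int\phi|\nabla\phi||\nabla u||\nabla\bar\nabla u|\Big).$$
Here we commute third derivatives; on a Kähler manifold $\nabla_{\bar j}\nabla_i\nabla_{\bar k}u=\nabla_i\nabla_{\bar j}\nabla_{\bar k}u$ holds without curvature terms for the mixed type. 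The boundary term is bounded by $\frac12\int\phi^2|\nabla\bar\nabla u|^2+4\int|\nabla\phi|^2|\nabla u|^2$. Since $|\nabla u|,|\Delta u|\le C$, this gives
$$\int_{B(x,r)}|\nabla\bar\nabla u|^2\le C|B(x,2r)|+Cr^{-2}|B(x,2r)|.$$
For the pure part we proceed as in the proof of \eqref{L2Ricbd}:
$$\int\phi^2|\nabla\nabla u|^2=\int\phi^2\big((\Delta u)^2-Ric(\nabla u,\bar\nabla u)\big)+\text{(cutoff terms)}.$$
The cutoff terms are handled the same way. The Ricci term equals $\langle\sqrt{-1}\partial\bar\partial u+\chi,\nabla u\otimes\bar\nabla u\rangle$, which is bounded by $C|\nabla\bar\nabla u|+C$ and is therefore controlled by the mixed estimate and Cauchy–Schwarz.

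To pass to averages we divide by $|B(x,r,t)|$ and use the volume comparison \eqref{volumenoncollased}: $|B(x,2r)|/|B(x,r)|\le C$, since both are comparable to $r^{2n}vol_{g(t)}(X)$. This gives $\fint_{B(x,r,t)}|Hess\,u|^2\le C(1+r^{-2})\le C'r^{-2}$, because $r\lesssim\mathrm{diam}\le C$. For $r$ larger than the diameter the ball is all of $X$, and the estimate follows from \eqref{L2Ricbd} after normalizing by the volume.

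The main obstacle is making the pure-Hessian integration by parts rigorous with the cutoff: the commutation $\nabla_{\bar j}\nabla_{i}\nabla_{k}u$ versus $\nabla_i\nabla_{\bar j}\nabla_k u$ produces the Ricci term, and each integration by parts produces terms where $\nabla\phi$ hits an expression containing $\nabla u$. All of these must be arranged so that only $|\nabla u|\le C$, $|\Delta u|\le C$ and the already-controlled $\int\phi^2|\nabla\bar\nabla u|^2$ appear. No curvature integrals beyond Ricci should arise. I expect the bookkeeping to close via the absorption $ab\le\epsilon a^2+\epsilon^{-1}b^2$ at each step.
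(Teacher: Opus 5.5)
Your proposal is correct and is essentially the paper's argument. The paper also uses a cutoff-localized Bochner identity for $u$, closed by the Song--Tian bounds \eqref{stc1}, Cauchy--Schwarz absorption, and the doubling supplied by \eqref{volumenoncollased}; the only difference is that you split the Hessian into its $(1,1)$ and $(2,0)$ parts as in the proof of \eqref{L2Ricbd}, whereas the paper integrates the real formula for $\Delta|\nabla u|^2$ in one step.

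Two small corrections. First, the curvature-free commutation in the mixed part is between two barred derivatives, $\nabla_{\bar k}\nabla_{\bar j}\nabla_i u=\nabla_{\bar j}\nabla_{\bar k}\nabla_i u$, since Kähler curvature has type $(1,1)$. The mixed swap you wrote does produce a curvature term, exactly as in your pure part. Second, for $r\ge\mathrm{diam}$ you cannot quote \eqref{L2Ricbd}, because $vol_{g(t)}(X)\to0$ and dividing by it gives no uniform bound. Use the global identities $\int_X|\nabla\bar\nabla u|^2=\int_X(\Delta u)^2$ (and its pure analogue) to get $\fint_X|Hess\,u|^2\le C$. In any case the estimate can only hold for $r\le C$, since its right-hand side tends to $0$ as $r\to\infty$.
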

\begin{proof}
Recall that
$$-\sqrt{-1}\partial\bar{\partial}u=\chi+Ric.$$
Applying the Bochner's formula, we see that
\begin{eqnarray*}\label{Bochner-formula-1}
\Delta|\nabla u|^2&=&2|Hess u|^2+2Ric(\nabla u,\bar{\nabla}u)+2Re\langle\nabla\Delta u,\nabla u\rangle\\
&=&2|Hess u|^2+2Ric(\nabla u,\bar{\nabla}u)-2Re\langle\nabla(tr_{\omega}\chi+R),\nabla u\rangle.
\end{eqnarray*}
Let $\phi$ be a cut-off function such that $\phi=1$ on $B(x,r,t)$, $\phi=0$ on $X\setminus B(x,r,t)$ and $|\nabla\phi|\le\frac{C}{r}$, where $C=C(n,\omega_0)$. See Bamler-Zhang \cite[Theorem 1.3]{BamZh17}. So above equation integrating $\phi^2$ at $t$ yields
\begin{eqnarray*}
2\fint|Hess u|^2\phi^2dg_t&=&\fint\Delta|\nabla u|^2\phi^2-2\fint Ric(\nabla u,\bar{\nabla}u)\phi^2+2\fint Re\langle\nabla\Delta u,\nabla u\rangle\phi^2\\
&=&-4\fint\langle\nabla|\nabla u|,\nabla\phi\rangle|\nabla u|\phi dg_t+2\fint\langle\sqrt{-1}\partial\bar{\partial}u+\chi,\bar{\nabla}u\otimes\nabla u\rangle\phi^2\\
&-&2\fint(\Delta u)^2\phi^2-4\fint \Delta u\langle\nabla u,\nabla\phi\rangle\phi.
\end{eqnarray*}
Note that $|\nabla|\nabla u||\le |Hess u|$. An application of the Cauchy-Schwarz inequality, together with Song-Tian's estimates \eqref{stc1} for $|\nabla u|$, $|\Delta u|$ and $R$, yields
\begin{eqnarray*}
\fint|Hess u|^2\phi^2dg_t&=&\frac{C_1}{r^2}\fint_{B(x,r,t)}\big(|\nabla u|^2+|\nabla u|^4\big)+2\fint_{B(x,r,t)}(\Delta u)^2+\frac{C_1}{r^2}\fint|\Delta u||\nabla u|\\
&\le&C_2\cdot r^{-2}
\end{eqnarray*}
where the last inequality we use the Jian-Song's volume estimates \eqref{volumenoncollased}.
\end{proof}

\section{Sobolev inequality along the K\"ahler-Ricci flow}
We know that Sobolev inequality contains various analytical and geometric information, including volume noncollapsing and isoperimetric inequality e.g. It is an important tool in studying elliptic and parabolic differential equations on manifolds.
\subsection{Bamler's estimates on Nash entropy}
In this subsection, we recall Bamler's estimates on Nash entropy \cite{Bam20}. Let $\tilde{g}(s),~s\in[0,1]$ be a solution to the Ricci flow
\begin{equation}\label{RF}
\frac{\partial}{\partial s}\tilde{g}(s)=-2\widetilde{Ric}(s)
\end{equation}
on compact manifold $M$ of real dimension $m$. Let $\tilde{\Delta}$ and $\widetilde{R}$ be the Laplacian and scalar curvature of $\tilde{g}$ respectively. Note that the evolution equation for the scalar curvature reads
$$\frac{\partial}{\partial s}\widetilde{R}=\tilde{\Delta}\widetilde{R}+2|\widetilde{Ric}|^2\geq\tilde{\Delta}\widetilde{R}+\frac{2}{m}\widetilde{R}^2.$$
Applying the maximum principle, we can deduce the following lower bound on the scalar curvature: if $\widetilde{R}(\cdot,t)\geq \widetilde{R}_{min},$ then for all $t\geq t_0,$ we have
\begin{equation}\label{scalarlb}
\widetilde{R}(\cdot,t)\geq\frac{m}{2}\frac{\widetilde{R}_{min}}{\frac{m}{2}-\widetilde{R}_{min}(t-t_0)}.
\end{equation}
In particular, if $t_0\in[0,T),$ then
\begin{equation}\label{scalarlb1}
\widetilde{R}(\cdot,t)\geq-\frac{m}{2(t-t_0)}.
\end{equation}
For any $\tau>0$ and let $d\nu=(4\pi\tau)^{-m/2}e^{-\widetilde{f}}d\widetilde{g}$ be a probability measure on $M$, then we define
$$\mathcal{N}[\widetilde{g},\widetilde{f},\tau]=\int_M\widetilde{f}d\nu-\frac{m}{2},\quad \mathcal{W}[\widetilde{g},\widetilde{f},\tau]=\int_M\big(\tau(|\nabla\widetilde{f}|^2+\widetilde{R})+\widetilde{f}-m\big) d\nu.$$
Consider a conjugate heat kernel measure
$$d\nu_{x_0,t_0}=(4\pi\tau)^{-m/2}e^{-\widetilde{f}}d\widetilde{g}=\widetilde{K}(x_0, t_0;\cdot,\cdot)d\widetilde{g}$$
based at some point $(x_0,t_0)\in M\times I$, where $\tau=t_0-t$.
\begin{definition}{\rm(\cite[Definition 5.1]{Bam20})}
The \textbf{pointed Nash entropy} at $(x_0,t_0)$ is defined as
$$\mathcal{N}_{x_0,t_0}(\tau):=\mathcal{N}[\widetilde{g}_{t_0-\tau}, \widetilde{f}_{t_0-\tau},\tau].$$
We set $\mathcal{N}_{x_0,t_0}(0):=0$. For $s<t_0$, $s\in I$, we also write
$$\mathcal{N}_s^*(x_0,t_0):=\mathcal{N}_{x_0,t_0}(t_0-s).$$
\end{definition}

For reader's convenience, we recall Bamler's Harnack inequality for the Nash entropy and Bamler's noninflating estimate for distance ball.
\begin{proposition}{\rm(\cite[Corollary 5.11]{Bam20})}\label{Bamler's Harnack inequality for the Nash entropy}
If $R(t^*)\geq R_{\min}, s<t^*\leq \min\{t_1,t_2\}$, and $s,t^*,t_1,t_2\in I $, then for any $x_1,x_2 \in M $, we have
\begin{eqnarray*}
\mathcal{N}_{x_1,t_1}(t_1-s)-\mathcal{N}_{x_2,t_2}(t_2-s)&\leq&\left(\frac{m}{2(t^*-s)}-R_{\min}\right)^{\frac{1}{2}} \operatorname{dist}_{W_1}^{\widetilde{g}_{t^*}}\big(\nu_{x_1,t_1}(t^*),\nu_{x_2,t_2}(t^*)\big)\\
&~&+\frac{m}{2} \log\left(\frac{t_2-s}{t^*-s}\right).
\end{eqnarray*}
\end{proposition}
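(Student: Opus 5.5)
The plan is to derive the inequality from three pointwise properties of the function $(y,t)\mapsto\mathcal{N}^*_s(y,t)$ for $t>s$. These are Bamler's basic estimates \cite[Theorem 5.9]{Bam20}, and I would either quote them or re-derive them as stated below. Throughout, write $\square=\partial_t-\tilde\Delta$.

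\textbf{(a) Gradient bound.} For $t>s$ with $\widetilde R(t)\geq R_{\min}$,
$$|\nabla \mathcal{N}^*_s(\cdot,t)|\le\Big(\frac{m}{2(t-s)}-R_{\min}\Big)^{1/2}.$$
This comes from differentiating the Nash entropy in the base point. One uses $\widetilde f$ for the heat kernel based at $(y,t)$ and the integrated gradient and Hessian estimate for the heat kernel. The input for that estimate is Perelman's $\mathcal{W}$-monotonicity together with the bound $\tau(2\Delta \widetilde f-|\nabla \widetilde f|^2+\widetilde R)+\widetilde f-m\le 0$.

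\textbf{(b) Subsolution property.} $\square\,\mathcal{N}^*_s\le 0$ holds in the barrier/weak sense.

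\textbf{(c) Reproduction formula.} We have $\nu_{x,t_1;s}=\int\nu_{y,t^*;s}\,d\nu_{x,t_1;t^*}(y)$.

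\textbf{Upper comparison at time $t^*$.} Fix $i=1$. Since $\nu_{x_1,t_1;t}$ solves the conjugate heat equation, for $t\in[t^*,t_1]$ we have
$$\frac{d}{dt}\int\mathcal{N}^*_s(\cdot,t)\,d\nu_{x_1,t_1;t}=\int\square\mathcal{N}^*_s\,d\nu_{x_1,t_1;t}\le0$$
by (b). Integrating this from $t^*$ to $t_1$ gives
$$\mathcal{N}^*_s(x_1,t_1)\le\int\mathcal{N}^*_s(y,t^*)\,d\nu_{x_1,t_1;t^*}(y).$$
Here $\nu_{x_1,t_1;t_1}=\delta_{x_1}$, and the only technical point is the limit at $t=t_1$, which follows from continuity of $\mathcal{N}^*_s$.

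\textbf{Lower comparison at time $t^*$.} Fix $i=2$. Let $u$ be the density of $\nu_{x_2,t_2;s}$ with respect to $d\tilde g_s$. By definition,
$$\mathcal{N}^*_s(x_2,t_2)=-\int u\log u\,d\tilde g_s-\frac m2\log\big(4\pi(t_2-s)\big)-\frac m2.$$
By (c), $u=\int u_y\,d\nu_{x_2,t_2;t^*}(y)$, where $u_y$ is the density of $\nu_{y,t^*;s}$. The functional $u\mapsto-\int u\log u$ is concave, so Jensen's inequality gives $-\int u\log u\ge\int\big(-\int u_y\log u_y\big)\,d\nu(y)$. Rewriting each term through $\mathcal{N}^*_s(y,t^*)$, whose time scale is $t^*-s$, yields
$$\mathcal{N}^*_s(x_2,t_2)\ge\int\mathcal{N}^*_s(y,t^*)\,d\nu_{x_2,t_2;t^*}(y)-\frac m2\log\frac{t_2-s}{t^*-s}.$$

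\textbf{Combining.} Subtract the lower comparison from the upper comparison. By (a), the function $\mathcal{N}^*_s(\cdot,t^*)$ is Lipschitz with respect to $\tilde g_{t^*}$, with constant $L=(\frac{m}{2(t^*-s)}-R_{\min})^{1/2}$. Kantorovich–Rubinstein duality then gives
$$\int\mathcal{N}^*_s(\cdot,t^*)\,d\nu_{x_1,t_1;t^*}-\int\mathcal{N}^*_s(\cdot,t^*)\,d\nu_{x_2,t_2;t^*}\le L\,\operatorname{dist}_{W_1}^{\tilde g_{t^*}}\big(\nu_{x_1,t_1}(t^*),\nu_{x_2,t_2}(t^*)\big).$$
This is exactly the claimed estimate.

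\textbf{Main obstacle.} The main obstacle is establishing (a) and (b), in particular the sharp constant $\frac m{2(t-s)}-R_{\min}$ in the gradient bound. This requires Bamler's careful use of the $\mathcal W$-functional and the heat kernel gradient bounds, and it is where the scalar curvature lower bound at time $t^*$ enters. Since the paper cites \cite[Corollary 5.11]{Bam20}, I would quote \cite[Theorem 5.9]{Bam20} for these two properties and carry out only the comparison argument above.
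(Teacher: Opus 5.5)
Your argument is essentially Bamler's proof of \cite[Corollary 5.11]{Bam20}, which the paper simply cites: you integrate $\square\mathcal N^*_s\le 0$ along $\nu_{x_1,t_1;t}$, obtain the reverse comparison with the loss $\frac m2\log\frac{t_2-s}{t^*-s}$ (your Jensen step is the integrated form of $\square\mathcal N^*_s\ge-\frac{m}{2(t-s)}$), and conclude by Kantorovich--Rubinstein duality with the gradient bound at time $t^*$. One caution: in the derivation of that gradient bound, including the one you sketch, the curvature enters only through $(t-s)\int(|\nabla\widetilde f|^2+\widetilde R)\,d\nu_{x,t;s}\le\frac m2$, i.e.\ at the bottom time $s$, so that argument gives (a) from $\widetilde R(\cdot,s)\ge R_{\min}$ (which propagates forward to $t^*$) but does not use a lower bound imposed only at time $t^*$, as you state (a) (this is harmless in the paper, where $R$ is uniformly bounded for all times).
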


\begin{proposition}{\rm(\cite[Theorem 8.1]{Bam20})}\label{Bamler's noninflating estimate for distance ball}
If $[t-r^2, t] \subset I$ and $R\geq R_{\min}$ on $M \times[t-r^2,t]$, then for any $1\leq A <\infty$, the following holds
\begin{align*}
|B(x, t, Ar)|_t \leq C(R_{\min} r^2) \exp(\mathcal{N}_{x,t}(r^2)) \exp(C_0 A^2) r^m.
\end{align*}
Here $C_0$ denotes constant a dimensional constant.
\end{proposition}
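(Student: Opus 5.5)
The result just stated is quoted from \cite[Theorem 8.1]{Bam20}. If I had to reprove it, I would bound the volume of $B(x,t,Ar)$ at time $t$ by integrating, over that ball, a uniform lower bound for a heat kernel whose first slot ranges over the ball. Fix $s=t-r^2$. By parabolic rescaling we may take $r=1$, and then $R_{\min}r^2$ becomes the only curvature parameter. Two standard facts are used.
\begin{itemize}
\item For fixed $(y,s)$, the function $z\mapsto \widetilde K(z,t;y,s)$ solves the forward heat equation. Since $\frac{d}{dt}d\widetilde g_t=-\widetilde R\,d\widetilde g_t$, its total mass satisfies
\[
\int_M \widetilde K(z,t;y,s)\,d\widetilde g_t(z)\le e^{-R_{\min}(t-s)}=C(R_{\min}r^2).
\]
\item Bamler's on-diagonal lower bound \cite[Theorem 7.2]{Bam20}: if $(z',s)$ is an $H_m$-center of $\nu_{z,t}(s)$, then for $y$ in a $Cr$-neighbourhood of $z'$ at time $s$,
\[
\widetilde K(z,t;y,s)\ge c\,r^{-m}\exp\big(-\mathcal N_{z,t}(r^2)\big),
\]
with a Gaussian factor $e^{-C\,d_s(y,z')^2/r^2}$ once $y$ is further away.
\end{itemize}

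\emph{Step 1: compare entropies.} Take $z\in B(x,t,Ar)$ and apply Proposition \ref{Bamler's Harnack inequality for the Nash entropy} with $t_1=t_2=t^*=t$ and $s=t-r^2$. The $W_1$-distance between the two Dirac masses is $d_t(x,z)<Ar$, so
\[
\mathcal N_{z,t}(r^2)\le \mathcal N_{x,t}(r^2)+C(R_{\min}r^2)\,A .
\]
Using $CA\le C'A^2$ for $A\ge1$, this gives $\exp(-\mathcal N_{z,t}(r^2))\ge e^{-CA^2}\exp(-\mathcal N_{x,t}(r^2))$.

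\emph{Step 2: choose a common point $y$.} Fix an $H_m$-center $(x',s)$ of $\nu_{x,t}(s)$ and set $y=x'$. Monotonicity of the $W_1$-distance between conjugate heat kernels gives
\[
\operatorname{dist}_{W_1}^{s}\big(\nu_{x,t}(s),\nu_{z,t}(s)\big)\le d_t(x,z)<Ar .
\]
Both centers lie within $\sqrt{H_m}\,r$ of their measures in $W_1$, so $d_s(x',z')\le Ar+Cr$. The Gaussian lower bound then gives
\[
\widetilde K(z,t;x',s)\ge c\,r^{-m}\exp\big(-\mathcal N_{z,t}(r^2)\big)e^{-CA^2}\ge c\,r^{-m}\exp\big(-\mathcal N_{x,t}(r^2)\big)e^{-C_0A^2}.
\]

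\emph{Step 3: integrate.} Integrate the bound from Step 2 over $z\in B(x,t,Ar)$ with respect to $d\widetilde g_t$ and compare with the mass bound $\le C(R_{\min}r^2)$. Rearranging yields
\[
|B(x,t,Ar)|_t\le C(R_{\min}r^2)\exp\big(\mathcal N_{x,t}(r^2)\big)e^{C_0A^2}r^m .
\]

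The main obstacle is the off-center Gaussian lower bound for the heat kernel in Step 2. It requires Bamler's full machinery of $H_m$-centers, variance bounds, and the lower heat kernel estimate derived from the Nash entropy. If that becomes problematic, I would first try the weaker on-diagonal version and spend the factor $e^{CA^2}$ to cover the ball $B(x,t,Ar)$ by boundedly-overlapping smaller balls.
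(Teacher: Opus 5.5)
\textbf{There is a genuine gap, in Step 2.} The paper gives no proof of this statement; it simply imports \cite[Theorem 8.1]{Bam20}. Your overall scheme is the right one: bound a heat kernel from below on $B(x,t,Ar)$ and integrate against $\int_M\widetilde K(\cdot,t;y,s)\,d\widetilde g_t\le e^{-R_{\min}r^2}$. Fact 1, Step 1 and Step 3 are fine.

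In Step 2 you need
\[
\widetilde K(z,t;x',s)\ \ge\ c\,r^{-m}e^{-\mathcal N_{z,t}(r^2)}e^{-CA^2}
\]
at the \emph{prescribed} point $x'$ of the time-$s$ slice, uniformly for $z\in B(x,t,Ar)$. That point lies at distance up to $(A+2\sqrt{H_m})r$ from the center $z'$ of $\nu_{z,t}(s)$. So this is a Gaussian \emph{lower} bound for the conjugate heat kernel in its backward argument, and it cannot simply be quoted.

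Bamler's bounds relative to $H_m$-centers are Gaussian \emph{upper} bounds. The lower-bound argument available on the time-$s$ slice combines Jensen applied to $\int\log\widetilde K\,d\nu_{z,t}(s)=-\mathcal N_{z,t}(r^2)-\frac m2\log(4\pi e r^2)$ with the on-diagonal upper bound. It only yields \emph{some} point near $z'$, depending on $z$, where $\widetilde K(z,t;\cdot,s)\gtrsim r^{-m}e^{-\mathcal N_{z,t}(r^2)}$. Under $R\ge R_{\min}$ alone, nothing in your toolkit controls the density of $\nu_{z,t}(s)$ with respect to $d\widetilde g_s$ pointwise in the backward variable. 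So you cannot move that point to a common $x'$.

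Your fallback does not escape this. The case $A=1$ already needs the same off-center bound. Moreover, bounding the number of $r$-balls covering $B(x,t,Ar)$ by $e^{CA^2}$ would need a non-collapsing bound, which requires an upper bound on $R$.

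\textbf{How to repair it.} Take the lower bound in the forward (time-$t$) argument, where heat-equation gradient estimates hold without curvature assumptions. First smooth by $\widetilde K(x,t;\cdot,s)$, so that a global upper bound is available. With $s=t-r^2$, set
\[
u(y,t')=\int_M\widetilde K(y,t';w,s)\,\widetilde K(x,t;w,s)\,d\widetilde g_s(w),\qquad t'\in(s,t].
\]
Then the argument runs as follows.
\begin{enumerate}
\item Upper bound: $u\le M_0:=\sup_w\widetilde K(x,t;w,s)\le C(R_{\min}r^2)\,r^{-m}e^{-\mathcal N_{x,t}(r^2)}$, by Bamler's on-diagonal upper bound.
\item Lower bound at $x$: $u(x,t)=\int\widetilde K\,d\nu_{x,t}(s)\ge(4\pi e r^2)^{-m/2}e^{-\mathcal N_{x,t}(r^2)}$, by Jensen.
\item Gradient estimate: Hamilton's estimate $(t'-s)|\nabla\log u|^2\le\log(M_0/u)$ holds along Ricci flow, because $(\partial_t-\Delta)|\nabla u|^2=-2|\nabla^2u|^2$. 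Hence $\sqrt{\log(M_0/u(\cdot,t))}$ is $\frac1{2r}$-Lipschitz, and $u(\cdot,t)\ge c(R_{\min}r^2)\,r^{-m}e^{-\mathcal N_{x,t}(r^2)}e^{-A^2/2}$ on $B(x,t,Ar)$.
\item Mass bound: Fubini and your Fact 1 give $\int_Mu(\cdot,t)\,d\widetilde g_t\le e^{-R_{\min}r^2}$.
\end{enumerate}
Integrating over the ball gives the statement, with $C_0=\frac12$. The Nash-entropy Harnack inequality of your Step 1 is not needed at all. Applying the gradient estimate directly to $\widetilde K(\cdot,\cdot;x',s)$ would fail, since that function has no global upper bound in terms of $\mathcal N_{x,t}$.
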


\subsection{Log Sobolev inequality and Sobolev inequality along K\"ahler-Ricci flow}\label{mu-entropy and Sobolev inequality along KRF}
In this subsection, we strengthen the monotonicity of Perelman's $\mathcal{W}$-entropy to a uniform Sobolev inequality along Ricci flow. We adopt the argument in \cite{Bam20,CMZ23} to show the uniform estimate of Sobolev constant under K\"ahler-Ricci flow (\ref{UKRF}). In this subsection, all constants $C$ in this section depend on $\omega_0,n$ unless otherwise specified.

\begin{theorem}{\rm(= Theorem \ref{Sobolev-inequality-NKRF})}\label{ZhZh22}
Under the assumption of {\rm Theorem \ref{Sobolev-inequality-NKRF}}, then along the normalized K\"ahler-Ricci flow \eqref{UKRF}, we have the uniform entropy estimate
\begin{align}\label{u-lowerbd}
\mu(g(t),\tau)\geq\log vol_{g(t)}(X,g(t))-C,\quad\forall~0<\tau\leq1,
\end{align}
and the uniform Sobolev inequality
\begin{align}\label{Soboleviq}
\Big(\fint_X|f|^{\frac{2n}{n-1}}\omega(t)^n\Big)^{\frac{n-1}{n}}\leq C_S\cdot\fint_X\big(|\nabla f|^2+f^2\big)\omega(t)^n,\quad f\in C^{\infty}(X,\mathbb{R}).
\end{align}
\end{theorem}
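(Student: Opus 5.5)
Fix a time $t_0\ge 1$ (for $t_0\le 1$ the flow is uniformly equivalent to $\omega_0$ and everything is standard). The plan is to bound the Nash entropy from below using Bamler's theory, then pass from Nash entropy to Perelman's $\mu$-entropy, and finally from $\mu$ to the Sobolev inequality.

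First we rescale so that Bamler's results apply. Set $s=e^{t}$ (or a local linear reparametrization) so that $\tilde g(s)=e^{t}g(t)$ on a unit-size window solves the unnormalized Ricci flow \eqref{RF}. Since $R$ is uniformly bounded by \eqref{stc1}, and the rescaling factor is comparable to $e^{t_0}$ on an interval of length $O(1)$ in $t$, the unnormalized flow has $\widetilde R\ge -C e^{-t_0}$. Alternatively, and more cleanly, we work directly on the interval $[t_0-2,t_0]$, where \eqref{UKRF} differs from Ricci flow by a bounded-time rescaling with factor in $[e^{-2},1]$.

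Next we lower-bound the Nash entropy. By Proposition \ref{Bamler's noninflating estimate for distance ball} reversed, or rather by Bamler's lower volume bound $|B(x,t,\sqrt\tau)|\le C\exp(\mathcal N_{x,t}(\tau))\tau^n$, a lower bound on $\mathcal N$ follows once we know that $\nu_{x_0,t_0}(t_0-\tau)$ is concentrated on a ball $B(z,C\sqrt\tau)$, which Bamler gives via $H_n$-centers. We then need the reverse inequality $\mathcal N_{x,t}(\tau)\ge \log(|B(z,\sqrt\tau,t-\tau)|\tau^{-n})-C$. This comes from Jensen's inequality together with Bamler's Gaussian upper heat kernel bound $K\le C\exp(-\mathcal N)\tau^{-n}$, combined with the volume estimate \eqref{volumenoncollased}: $|B(z,\sqrt\tau)|\ge C^{-1}vol_{g}(X)\tau^n$. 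Together these give $\mathcal N_{x,t}(\tau)\ge \log vol_{g(t)}(X)-C$ for $\tau\le 1$. The Harnack inequality of Proposition \ref{Bamler's Harnack inequality for the Nash entropy}, with the diameter bound \eqref{diameter-bound-NKRF} controlling $\mathrm{dist}_{W_1}$, makes this uniform in the base point. Note that $vol_{g(t)}(X)\sim e^{-(n-\kappa)t}$ varies only by bounded factors on $[t_0-2,t_0]$.

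We then pass from Nash entropy to $\mu$. Following \cite{CMZ23} and \cite[Prop.~5.2]{Bam20}, $\mu(g(t_0-\tau),\tau)$ is realized by a minimizer $f_\tau$, and the associated $\mathcal W$-entropy is bounded below by $\inf_x\mathcal N_{x,t_0}(\tau')$ at comparable scales. Concretely, we take the conjugate heat flow from the minimizer's measure $e^{-f}$, written as a superposition of $\nu_{x,t_0}$. Monotonicity of $\mathcal W$ along the conjugate flow, concavity of $\mathcal N$ in the superposition, and the scalar bound on $R$ then give $\mu(g(t),\tau)\ge \inf_x\mathcal N_{x,t+\tau}(\tau)-C\ge \log vol-C$. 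This is \eqref{u-lowerbd}. I expect this step, the transfer from the pointwise Nash entropy lower bound to the infimum $\mu$ over all test functions, to be the main obstacle. I would attempt it via the argument of Chan--Ma--Zhang, which uses $\mathcal W(g,f,\tau)\ge \mu$ evolution and \eqref{scalarlb1}.

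Finally we derive the Sobolev inequality. The bound \eqref{u-lowerbd} for all $\tau\in(0,1]$, with $R$ bounded, yields the log-Sobolev inequality
$$\fint u^2\log u^2\le \epsilon^2\fint(4|\nabla u|^2+Ru^2)-n\log\epsilon^2+C,\qquad \fint u^2=1,\ \epsilon\le1,$$
after normalizing the measure to be a probability measure, which is exactly why $\log vol$ appears. The standard Davies argument then converts this into ultracontractivity $\|e^{s(\Delta-C)}\|_{1\to\infty}\le Cs^{-n}$ for $s\le1$, and Varopoulos's theorem gives \eqref{Soboleviq} with the $f^2$ term absorbing the bounded $R$ and large scales. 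Since real dimension is $2n$, the exponent $\frac{2n}{n-1}$ matches.
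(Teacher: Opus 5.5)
Your overall architecture is the paper's. You rescale to an unnormalized Ricci flow on a unit time window. You bound $\mu$ from below by an infimum of pointed Nash entropies by running the conjugate heat equation backward from a minimizer, using monotonicity of $\overline{\mathcal W}$, $\widetilde R\ge -n$ from \eqref{scalarlb1}, and concavity of entropy for the superposition. You then bound the Nash entropy from below via Bamler and finish with log-Sobolev $\Rightarrow$ Sobolev. The paper simply cites \cite[Theorem 6.2.1]{ZhQ16}; your Davies--Varopoulos sketch is that argument.

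\textbf{Gap.} Your justification of the Nash entropy lower bound runs in the wrong direction. Suppose $\nu_{x,t}(t-\tau)$ gives mass $\ge\frac12$ to $B(z,C\sqrt\tau)$ and $K\le C\tau^{-n}e^{-\mathcal N_{x,t}(\tau)}$. Then $\frac12\le C\tau^{-n}e^{-\mathcal N_{x,t}(\tau)}|B(z,C\sqrt\tau)|$, i.e.\ $\mathcal N_{x,t}(\tau)\le\log(|B|\tau^{-n})+C$. That is the non-collapsing direction, an \emph{upper} bound on $\mathcal N$. Jensen likewise only gives $-\int K\log K\le\log(\text{volume})$, again an upper bound. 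Inserting $K\le C\tau^{-n}e^{-\mathcal N}$ into $\int K\log K$ is circular and yields nothing. So your ``reverse inequality'' does not follow from the tools you name, and $H_n$-centers play no role.

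\textbf{Fix.} What you need is exactly the inequality you wrote first and then set aside: Bamler's non-inflating estimate \cite[Theorem 8.1]{Bam20}. Read directly, it says $\mathcal N_{x,t}(r^2)\ge\log\big(|B(x,t,Ar)|_t\,r^{-2n}\big)-C_0A^2-C$. This is a lower bound on $\mathcal N$ by the volume of a ball at the base time $t$, centered at the base point. The paper applies it with $r=1$ and $A$ comparable to the diameter, which is bounded by \eqref{diameter-bound-NKRF}. The ball is then all of $X$, and $\mathcal N_{x,1}(1)\ge\log vol-C$ for every $x$. Alternatively, take $A=1$, $r=\sqrt\tau$ and use \eqref{volumenoncollased} at time $t$. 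Either way the bound is uniform in $x$, so the Harnack inequality is not needed.

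\textbf{Time bookkeeping.} The minimizer of $\mu(g(t),\tau)$ lives at time $t$, and its backward conjugate flow is a superposition of the $\nu_{y,t}$. The resulting bound is $\mu(g(t),\tau)\ge\inf_y\mathcal N_{y,t}(\sigma)-C$ for a fixed backward time $\sigma$. The paper takes $\sigma=1$, which gives $\overline{\mathcal N}(0)\le\mu+2n\tau$. It is not $\mathcal N_{x,t+\tau}(\tau)$. Pairing $\mu(g(t_0-\tau),\tau)$ with $\nu_{x,t_0}$, as you wrote, only gives $\mathcal W_{x,t_0}(\tau)\ge\mu(g(t_0-\tau),\tau)$, which is an upper bound on $\mu$.
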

\begin{proof} For readers' convenience, we sketch the proof in the following steps.

\textbf{Step 1: Adjusting Ricci flow.} Fix $t_0\geq0.$ Let $$\tilde{g}(s):=(1+2s)\cdot g(t_0+\log(1+2s)),~~0\leq s\leq1,$$
where $g(t_0)$ is the K\"ahler metric of $\omega(t_0).$ Then the metric $\tilde{g}$ and $g$ are uniformly equivalent to each other when $s\in[0,1],$ and $\tilde{g}(s)$ satisfies the Ricci flow
\begin{equation}\label{RF1}
\frac{\partial}{\partial s}\tilde{g}(s)=-2\widetilde{Ric}(s),~~\tilde{g}(0)=g(t_0).
\end{equation}
Let $\tau>0$ and $\eta\in C^{\infty}(X)$ be a minimizer of $\mu(\tilde{g}(1),\tau).$ By Rothaus \cite{Ro81}, we may assume that $\eta$ is positive. Consider the following conjugate heat equation
\begin{equation}\label{che}
-\frac{\partial}{\partial s}w=\tilde{\Delta}w-\widetilde{R},~~s\in[0,1]
\end{equation}
with initial $w(1)=\eta^2.$ Let $\tilde{K}(x,s;y,1)$ be the conjugate heat kernel. Write $$\tilde{K}(x,s;y,1)=(4\pi\tau)^{-n}e^{-\tilde{f}(x,s;y,1)}$$
where $\tau=\tau(s)=1-s$ and let
$$d\nu_{y,1;s}(x)=\tilde{K}(x,s;y,1)d\tilde{g}_s(x)$$
be associated heat kernel measure, then we define the Nash entropy
$$\mathcal{N}_{y,1}(s)=\int_X\tilde{f}(x,s;y,1)d\nu_{y,1;s}(x)-n.$$
Let $w(x,s):=(4\pi\tau)^{-n}e^{-f}$ and $d\bar{\nu}_s(x):=w(x,s)d_{\tilde{g}(s)}(x).$
We also introduce
$$\overline{\mathcal{N}}(s):=\int_Xf(x,s)d\bar{\nu}_s(x)-n,$$
$$\overline{\mathcal{W}}(s):=\int_X\big[\tau(\tilde{R}+|\nabla f|^2)+f-2n\big]d\bar{\nu}_s.$$
In particular, Bamler \cite[Proposition 5.2]{Bam20} or Chan-Ma-Zhang \cite[Lemma 3.2]{CMZ23} proved
$$\frac{d}{ds}\overline{\mathcal{W}}(s)\geq0,\qquad\frac{d}{ds}\big(\overline{\mathcal{N}}(s)\big)=-\overline{\mathcal{W}}(s).$$

\textbf{Step 2: Bounding $\mu$ in terms of Nash entropy.} Applying above estimates of $\overline{\mathcal{N}}(s)$ and $\overline{\mathcal{W}}(s),$ we can obtain that
$$(\tau+1)\cdot\overline{\mathcal{N}}(0)-\tau\cdot\overline{\mathcal{N}}(1)\leq\overline{\mathcal{W}}(1)=\mu(\tilde{g}(1),\tau).$$
Note that
$$\overline{\mathcal{N}}(1)=2n+\overline{\mathcal{W}}(1)-\tau\int_X(\tilde{R}+|\nabla f|^2)d\bar{\nu}_s\leq2n+\mu(\tilde{g}(1),\tau)+n\tau,$$
where we use $\tilde{R}(1)\geq-n$ under Ricci flow by (\ref{scalarlb1}).
Hence:
$$\overline{\mathcal{N}}(0)\leq\mu(\tilde{g}(1),\tau)+2n\tau.$$

By definition of $\overline{\mathcal{N}}(0)$,
\begin{align*}
\overline{\mathcal{N}}(0)~&=~\int_X f(x,0)w(x,s)d_{\tilde{g}(0)}(x)\\
~&=~-\int_Xw\log wd_{\tilde{g}(0)}(x)-n-n\log\big(4\pi(1+\tau)\big)\\
~&\geq~-\int_Xw\log wd_{\tilde{g}(0)}(x)-C(n),
\end{align*}
where $$w(x,0)=\int_Xw(y,1)\tilde{H}(x,0;y,1)d_{\tilde{g}(1)}(y)=\int_X\tilde{H}(x,0;y,1)d\bar{\nu}_1(y).$$
By Jensen inequality,
$$w(x,0)\log w(x,0)\leq\int_X\tilde{H}(x,0;y,1)\log\tilde{H}(x,0;y,1)d\bar{\nu}_1(y),$$
so,
\begin{align*}
\int_Xw\log wd_{\tilde{g}(0)}(x)~&\leq~\int_X\int_X\tilde{H}(x,0;y,1)\log\tilde{H}(x,0;y,1)d\bar{\nu}_1(y)d_{\tilde{g}(0)}(x)\\
~&=~\int_X\Big(\int_X\log\tilde{H}(x,0;y,1)d\bar{\nu}_0(x)\Big)d\bar{\nu}_1(y)\\
~&=~-\int_X\mathcal{N}_{y,1}(0)d\bar{\nu}_1(y)-n-n\log(4\pi).
\end{align*}
From definition of $\overline{\mathcal{N}}(0)$ and Jensen inequality, we have
$$\overline{\mathcal{N}}(0)\geq-\int_X\mathcal{N}_{y,1}(0)d\bar{\nu}_1(y)-C_1(n).$$
It remains to estimate $\mathcal{N}_{y,1}(0).$

\textbf{Step 3: Bounding $\mathcal{N}$ in terms of volume ration.} Applying Bamler's gradient estimate for Nash entropy $\mathcal{N}_{y,1}(0)$ in \cite[Corollary 5.11]{Bam20}, we get $\forall~x,y\in X$
$$\mathcal{N}_{y,1}(0)\leq\mathcal{N}_{x,1}(0)+\sqrt{2n}\cdot d_{\tilde{g}(1)}(x,y)\leq\mathcal{N}_{x,1}(0)+\sqrt{2n}\cdot diam(X,\tilde{g}(1)).$$
It follows that,
$$\overline{\mathcal{N}}(0)\geq-\inf_{y\in X}\mathcal{N}_{y,1}(0)-C_1(n)-\sqrt{2n}\cdot diam(X,\tilde{g}(1)).$$
Applying Bamler's upper volume bounds on distance balls with pointed Nash entropy in \cite[Theorem 8.1]{Bam20}, we obtain
$$vol(X,\tilde{g}(1))\leq C_2(n)\cdot e^{\mathcal{N}_{x,1}(0)}\cdot e^{C_2(n)(diam(X,\tilde{g}(0))^2}$$
for any $x\in X$. In particular,
$$\mathcal{N}_{x,1}(0)\geq\log vol(X,\tilde{g}(1))-\log C_2(n)-C_2(n)(diam(X,\tilde{g}(0)))^2$$
Arranging above estimates, we have
$$\mu(\tilde{g}(1),\tau)\geq\log vol(X,\tilde{g}(1))-\log C_2(n)-C_2(n)(diam(X,\tilde{g}(0)))^2-2n\tau.$$
Together with the estimate above and the fact that the metrics $\tilde{g}$ and $g$ are uniformly equivalent, we obtain the desired upper bound \eqref{u-lowerbd} for the scalar curvature along the normalized K\"ahler-Ricci flow.

\textbf{Step 4: Sobolev constant estimate.} Applying the classic arguments, the entropy lower bound implies the uniform Sobolev inequality. We refer the readers to  \cite[Theorem 6.2.1]{ZhQ16} for more details. In summery, we get our desired estimates \eqref{Soboleviq}.
\end{proof}

We end this section with the following ultracontractivity property of positive solution to the heat equation, which may be of independent interest. In the sequel, this ultracontractivity property of Ricci flow will be applied frequently to prove the convergence of scalar curvature along the normalized K\"ahler-Ricci flow. A more detailed discussion can be found in Section \ref{Proof of Theorem scalar-curvature-convergence}.

\begin{theorem}\label{ZhZl21}
Let $q>1$ be any constant. Let $(M,g(t))$ be a solution of Ricci flow on compact Riemann manifold of dimension $n$ on $M\times[0,t_0].$ Assume that
\begin{equation}\label{mu0lb}
\mu(g(0),\tau)\geq-A,~~\forall~0\leq\tau\leq t_0
\end{equation}
for some $A\geq0.$ Then for any nonnegative solution $u(t)$ of heat equation,
\begin{equation}
||u(t_0)||_{L^{\infty}}\leq C(n)\cdot e^{A+\lambda_{-}t_0}\cdot t_0^{-\frac{n}{2q}}\cdot||u(0)||_{L^q(g(0))}
\end{equation}
where $\lambda_{-}=\max\{0,-\lambda\},$ $\lambda$ is the first eigenvalue of $-\frac{q-1}{q}\Delta+R$ at time $t=0.$ In particular, if the scalar curvature uniform bound, then we have Zhang Qi' ultracontractivity be letting $q\to1.$
\end{theorem}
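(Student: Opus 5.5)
The plan is to prove Theorem \ref{ZhZl21} by the classical Davies-type argument: interpolate between $L^q$ and $L^\infty$ via a time-dependent exponent $p(t)$, and control the growth of $\|u(t)\|_{L^{p(t)}}$ using the log-Sobolev inequality coming from the entropy bound (\ref{mu0lb}). The log-Sobolev inequality is only assumed at time $0$, so the first step is to propagate it along the flow. By Perelman's monotonicity, $\mu(g(t),\tau-t)\ge \mu(g(0),\tau)\ge -A$ for $0\le t\le \tau\le t_0$. Writing out $\mu(g(t),\sigma)\ge -A$ gives, for every $\sigma\in(0,t_0-t]$ and every $v$ with $\int v^2\,dg(t)=1$,
\begin{equation*}
\int v^2\log v^2\,dg(t)\le \sigma\int\big(4|\nabla v|^2+Rv^2\big)dg(t)-\frac n2\log(4\pi\sigma)-n+A .
\end{equation*}
Because the scalar curvature appears here, we will need to absorb $\int Rv^2$. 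This is why $\lambda$, the bottom of the spectrum of $-\frac{q-1}{q}\Delta+R$, enters. Its evolution along the flow should only help, but to be safe we can keep track of $\lambda(t)\ge\lambda(0)$, using the monotonicity of the $R$-weighted Rayleigh quotient under Ricci flow, or simply carry the factor $e^{\lambda_-t}$.

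Next comes the $L^p$ evolution. Let $u\ge0$ solve $\partial_t u=\Delta u$, so that $\partial_t dg=-R\,dg$. For $p=p(t)$ increasing from $p(0)=q$ to $p(t_0)=\infty$, for instance $\frac1{p(t)}=\frac1q\big(1-\frac t{t_0}\big)$, we compute $\frac{d}{dt}\log\|u\|_{p(t)}$. With $v=u^{p/2}/\|u^{p/2}\|_2$, a standard computation gives
\begin{equation*}
\frac{d}{dt}\log\|u\|_{p}=\frac{p'}{p^2}\int v^2\log v^2-\frac{4(p-1)}{p^2}\int|\nabla v|^2-\frac1p\int Rv^2 .
\end{equation*}
We insert the log-Sobolev inequality with $\sigma$ chosen so that $\frac{p'}{p^2}\cdot4\sigma=\frac{4(p-1)}{p^2}\cdot\theta$, where $\theta\in(0,1]$ is a fraction to be fixed. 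The leftover gradient terms and the $R$ terms then combine into a multiple of the quadratic form of $-\frac{q-1}{q}\Delta+R$, using $\frac{p-1}{p}\ge\frac{q-1}{q}$. This combination is bounded below by $\lambda\int v^2$, which contributes at most $\lambda_-$ per unit time.

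The main obstacle is this step. We must match the coefficients of $\int|\nabla v|^2$ and $\int Rv^2$ coming from the entropy term and from the evolution so that exactly the operator $-\frac{q-1}{q}\Delta+R$ appears with coefficient $\lesssim 1$. We must also check that $\sigma\le t_0-t$ is admissible. If the exact matching fails, the fallback is to split $R=R_++R_-$ and estimate only the negative part via $\lambda$.

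Finally, integrating in $t$ from $0$ to $t_0$, the term $-\frac{p'}{p^2}\frac n2\log\sigma$ integrates to $\frac n{2q}\log\frac1{t_0}+C(n)$ for the chosen $p(t)$, because $\int_0^{t_0} p'/p^2\,dt=1/q$ and $\sigma\sim t_0 (p-1)/p'$-type scaling. The terms involving $A$ integrate to at most $A/q\le A$, and the $\lambda$ term to at most $\lambda_-t_0$. This yields $\|u(t_0)\|_\infty\le C(n)e^{A+\lambda_-t_0}t_0^{-n/(2q)}\|u(0)\|_{L^q(g(0))}$. For the final remark, letting $q\to1$ with $R$ bounded, $\lambda_-\le\sup R_-$ is bounded, and we recover Zhang's ultracontractivity estimate.
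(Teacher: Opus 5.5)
Your proposal is correct and is essentially the paper's argument. Both use the exponent $\tilde q(t)=qt_0/(t_0-t)$, the normalized $v=u^{\tilde q/2}/\|u^{\tilde q/2}\|_{L^2}$, Perelman's monotonicity to carry the time-$0$ entropy bound forward, and Li's monotonicity of the first eigenvalue of $-\frac{q-1}{q}\Delta+R$.

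The coefficient matching you flag as the main obstacle works out exactly. It reproduces the paper's $a(t)=\frac{\tilde q(3q+1)-4q}{\tilde q^2(3q+1)}$ and $b(t)=\frac{4q}{\tilde q^2(3q+1)}\le 1$, with admissible scale $\sigma=aqt_0\le t_0-t$. The paper also treats $[t_0/2,t_0]$ separately using $R\ge -n/t_0$ instead of the eigenvalue bound, but this split is not needed.
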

\begin{proof}
Define $\tilde{q}(t)=\frac{qt_0}{t_0-t},~t\in[0,t_0).$ Put $$v=\frac{u^{\frac{\tilde{q}}{2}}}{||u^{\frac{\tilde{q}}{2}}||_{L^2}}$$ which has unit $L^2$-norm. Then
\begin{eqnarray*}
\frac{d}{dt}\log||u||_{L^{\tilde{q}}}&=&\frac{\tilde{q}'}{\tilde{q}^2}\int v^2\log v^2-\frac{\tilde{q}-1}{\tilde{q}^2}\int4|\nabla v|^2-\frac{1}{\tilde{q}}\int Rv^2\\
&=&\frac{\tilde{q}'}{\tilde{q}^2}\int v^2\log v^2-a(t)\int\big(4|\nabla v|^2+Rv^2\big)-b(t)\int\big(\frac{q-1}{q}|\nabla v|^2+Rv^2\big)
\end{eqnarray*}
where
$$a(t):=\frac{\tilde{q}(3q+1)-4q}{\tilde{q}^2(3q+1)}~~and~~b(t):=\frac{4q}{\tilde{q}^2(3q+1)}$$
are both positive constants. Notice that
$$\int\Big(\frac{q-1}{q}|\nabla v|^2+Rv^2\Big)\geq\lambda(t)$$
is the first eigenvalue of $R-\frac{q-1}{q}\Delta$ at time $t$. By the monotonicity of $\lambda(t)$ under the Ricci flow \cite[Theorem 1.1]{Li07}, we have
$$\int\Big(\frac{q-1}{q}|\nabla v|^2+Rv^2\Big)\geq\lambda(t)\geq-\lambda_-(0).$$
Put $\tau=aqt_0.$ Observe that, on the time interval $0\leq t\leq \frac{t_0}{2},$ we have $b(t)\leq1$ and
$$aq=1-\frac{t}{t_0}-\frac{4}{3q+1}\cdot(1-\frac{t}{t_0})^2\leq1-\frac{t}{t_0}.$$
So,
\begin{eqnarray*}
\frac{d}{dt}\log||u||_{L^{\tilde{q}}}
&\leq&\frac{\tilde{q}'}{\tilde{q}^2}\int v^2\log v^2-a(t)\int(4|\nabla v|^2+Rv^2)+\lambda_-(0)\\
&=&-\frac{1}{qt_0}\Big(aqt_0\int(4|\nabla v|^2+Rv^2)-\int v^2\log v^2\Big)+\lambda_-(0)\\
&\leq&-\frac{1}{qt_0}\Big(\mathcal{W}\big(g(t),v(t),aqt_0\big)+\frac{n}{2}\log\big(aqt_0\big)-C(n)\Big)+\lambda_-(0).
\end{eqnarray*}
Combining Perelman's monotonicity formula of $\mathcal{W}$-entropy with assumption (\ref{mu0lb}), we have
$$\mathcal{W}\big(g(t),v(t),aqt_0\big)\geq\mu\big(g(t),aqt_0\big)\geq\mu\big(g(0),aqt_0+t\big)\geq-A.$$
So we have, when $0\leq t\leq \frac{t_0}{2},$
\begin{eqnarray}
\nonumber\frac{d}{dt}\log||u||_{L^{\tilde{q}}}&\leq&-\frac{n}{2qt_0}\log t_0-\frac{n}{2qt_0}\log\Big(1-\frac{t}{t_0}-\frac{4}{3q+1}\cdot(1-\frac{t}{t_0})^2\Big)\\
&~&+\frac{C(n)+A}{qt_0}+\lambda_-(0).
\end{eqnarray}

On the other hand, when $\frac{t_0}{2}\leq t\leq t,$ we have
$R\geq-\frac{n}{t_0}$ by (\ref{scalarlb1}) and
\begin{eqnarray*}
\frac{d}{dt}\log||u||_{L^{\tilde{q}}}
&\leq&\frac{\tilde{q}'}{\tilde{q}^2}\int v^2\log v^2-\frac{\tilde{q}-1}{\tilde{q}^2}\int(4|\nabla v|^2+Rv^2)+\frac{n}{t_0}\\
&=&-\frac{1}{qt_0}\Big(\tilde{\tau}(t)\int(4|\nabla v|^2+Rv^2)-\int v^2\log v^2\Big)+\frac{n}{t_0}\\
&\leq&-\frac{1}{qt_0}\cdot\Big(\mathcal{W}\big(g(t),v(t),\tilde{\tau}\big)+\frac{n}{2}\log(4\pi\tilde{\tau})+n\Big)+\frac{n}{t_0},
\end{eqnarray*}
where
$$\tilde{\tau}:=qt_0\cdot\frac{\tilde{q}-1}{\tilde{q}^2}=\Big(\frac{q-1}{q}-\frac{q-2}{q}\frac{t}{t_0}-\frac{1}{q}(\frac{t}{t_0})^2\Big)\cdot t_0\leq t_0-t.$$
So we have, when $t_0\geq t\geq \frac{t_0}{2},$
\begin{eqnarray}
\nonumber\frac{d}{dt}\log||u||_{L^{\tilde{q}}}&\leq&-\frac{n}{2qt_0}\log t_0-\frac{n}{2qt_0}\log\Big(\frac{q-1}{q}-\frac{q-2}{q}\frac{t}{t_0}-\frac{1}{q}(\frac{t}{t_0})^2\Big)\\
&~&+\frac{A}{qt_0}+\frac{C(n)}{t_0}.
\end{eqnarray}
Then integrating from $t=0$ to $t=t_0$ gives
\begin{eqnarray*}
\log\frac{||u(t_0)||_{L^{\infty}}}{||u(0)||_{L^q(g(0))}}&\leq&-\frac{n}{2q}\log t_0+\frac{C(n)+A+\log q}{q}+\lambda_-(0)t_0+C(n)\\
&\leq&-\frac{n}{2q}\log t_0+A+\lambda_-(0)t_0+C(n).
\end{eqnarray*}
This gives our desired estimates.
\end{proof}

\section{Proof of Theorem \ref{scalar-curvature-convergence}}\label{Proof of Theorem scalar-curvature-convergence}

In this section, all constants $C$ in this section depend on $\omega_0,n$ unless otherwise specified.
\subsection{Lower bound of scalar curvature}
We first state a useful lemma in this subsection, one that is often invoked in the proof of the main theorem.

\begin{lemma}\label{lpto0}
Under the normalized K\"ahler-Ricci flow {\rm\eqref{UKRF}}, we have
\begin{equation}
\fint_X|R(t)+\kappa|^p\omega(t)^n\longrightarrow0~~as~t\longrightarrow\infty
\end{equation}
for any $p\geq1.$
\end{lemma}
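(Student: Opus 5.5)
Since $R$ is uniformly bounded by \eqref{stc1}, we have $|R+\kappa|^p\le (C+\kappa)^{p-1}|R+\kappa|$. So it suffices to treat $p=1$, or equivalently $p=2$ by Cauchy--Schwarz for the normalized measure. We will also work with normalized averages, since $vol_{g(t)}(X)\sim e^{-(n-\kappa)t}$. The plan is to combine the pointwise local convergence \eqref{rlocalc0} of Jian on compact subsets of $X^o$ with the smallness of the measure of a neighborhood of the singular fibres, both measured by the normalized volume $\fint$.

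\textbf{Step 1.} Fix $\varepsilon>0$. Choose an open neighborhood $U_\varepsilon$ of $D$ in $X_{can}$ such that $\int_{f^{-1}(U_\varepsilon)}\Omega\le\varepsilon$; this is possible because $f^{-1}(D)$ is a proper analytic subset of $X$ and hence has $\Omega$-measure zero. Let $K_\varepsilon=X\setminus f^{-1}(U_\varepsilon)$, which is a compact subset of $X^o$.

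\textbf{Step 2.} Compare $\omega(t)^n$ with $\Omega$. From \eqref{npma} we have
\[
\omega(t)^n=e^{-(n-\kappa)t}e^{\dot\varphi+\varphi}\Omega .
\]
Together with the uniform bounds $|\varphi|+|\partial_t\varphi|\le C$ from \eqref{stc1}, this gives $C^{-1}e^{-(n-\kappa)t}\Omega\le\omega(t)^n\le Ce^{-(n-\kappa)t}\Omega$. In particular $vol_{g(t)}(X)\approx e^{-(n-\kappa)t}\int_X\Omega$. Hence for any measurable set $E$,
\[
\frac{\int_E\omega(t)^n}{\int_X\omega(t)^n}\le C\frac{\int_E\Omega}{\int_X\Omega},
\]
uniformly in $t$.

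\textbf{Step 3.} Split
\[
\fint_X|R+\kappa|\,\omega(t)^n\le \sup_{K_\varepsilon}|R(t)+\kappa|+C\cdot\frac{\int_{f^{-1}(U_\varepsilon)}\omega(t)^n}{\int_X\omega(t)^n}.
\]
The first term tends to $0$ as $t\to\infty$ by \eqref{rlocalc0}. By Step 2 and the bound on $R$, the second term is at most $C'\varepsilon$. Letting $t\to\infty$ and then $\varepsilon\to0$ gives the case $p=1$, and the reduction above then gives every $p\ge1$.

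\textbf{Main obstacle.} The only delicate point is the uniform comparison of measures in Step 2, since $\omega(t)$ degenerates as the fibres collapse. This comparison rests entirely on the $C^0$ bounds for $\varphi$ and $\dot\varphi$, which are available. If the case $\kappa=0$ must also be covered, then $X^o=X$ and \eqref{rlocalc0} already applies globally, so the argument is only needed for $\kappa\ge1$. One should also check that Jian's result is stated on $X^o$ (the paper writes $X_{reg}$), which is where Step 3 uses it.
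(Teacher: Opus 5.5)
Your proof is correct and follows essentially the same route as the paper. Both split $X$ into a compact subset of the regular part, where Jian's local uniform convergence \eqref{rlocalc0} applies, and a complement of small normalized volume, which is controlled by the uniform bound on $R$ and the comparison $\omega(t)^n\le Ce^{-(n-\kappa)t}\Omega$ coming from the $C^0$ bounds on $\varphi$ and $\partial_t\varphi$. Your version is slightly more explicit than the paper's, which compares with $\omega_0^n$ and does not say how the compact set $K$ is chosen. Your reduction to $p=1$ is harmless but unnecessary, since the paper treats $|R+\kappa|^p$ directly using the boundedness of $R$.
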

\begin{proof}
For any $\varepsilon>0,$ we can choose a subset $K\subset X_{reg}$ such that $vol_{\omega_0}(X\setminus K)\leq\varepsilon.$ Then for any $t>0,$
$$e^{(n-\kappa)t}\cdot vol_{\omega(t)}(X\setminus K)\leq C\cdot vol_{\omega_0}(X\setminus K)\leq C\cdot\varepsilon.$$
Hence, combining above inequality with (\ref{rlocalc0}) yields
\begin{align*}
\fint_X|R(t)+\kappa|^p\omega(t)^n~&\leq~\frac{1}{vol_{\omega(t)}(X)}\int_{X\setminus K}|R(t)+\kappa|^p\omega(t)^n+||R(t)+\kappa||^p_{C^0(K)}\\
~&\leq~C\cdot e^{(n-\kappa)t}vol_{\omega(t)}(X\setminus K)+||R(t)+\kappa||^p_{C^0(K)}\\
~&\leq~C\cdot\varepsilon+||R(t)+\kappa||^p_{C^0(K)}.
\end{align*}
It follow that $$\fint_X|R(t)+\kappa|^p\omega(t)^n\leq C\cdot\varepsilon$$ whenever $t$ is sufficiently large. The desired estimates follows.
\end{proof}

We will use above lemma with general ultracontractivity property of Ricci flow to obtain lower bound of scalar curvature along the normalized K\"ahler-Ricci flow \eqref{UKRF}.

\begin{proposition}\label{slowerbd}
Under the normalized K\"ahler-Ricci flow {\rm\eqref{UKRF}}, we have that
\begin{equation}\label{s-inf=-k}
\inf_XR(t)\longrightarrow-\kappa,~~as~t\longrightarrow\infty.
\end{equation}
\end{proposition}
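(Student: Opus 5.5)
The two inequalities $\inf_X R(t)\le-\kappa+o(1)$ and $\inf_X R(t)\ge -\kappa-o(1)$ are handled separately. The upper inequality is immediate from Lemma \ref{lpto0} with $p=1$: if $\inf_X R(t_k)\ge-\kappa+\delta$ along some sequence $t_k\to\infty$, then $\fint_X|R(t_k)+\kappa|\,\omega(t_k)^n\ge\delta$, which is a contradiction. So the whole work goes into the lower bound, i.e. showing that the negative part $w:=(-R-\kappa)_+$ satisfies $\|w(t)\|_{L^\infty}\to0$. The strategy is to turn the $L^p$-smallness of Lemma \ref{lpto0} into $L^\infty$-smallness via a parabolic subsolution inequality for $w$ and the ultracontractivity Theorem \ref{ZhZl21}, with the uniform entropy bound of Theorem \ref{ZhZh22} supplying the constant $A$.

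\textbf{Step 1: a differential inequality for $w$.} Along \eqref{UKRF} one has $\partial_t R=\Delta R+|Ric|^2+R=\Delta R+|Ric+\omega|^2-R-n$, hence
\begin{equation*}
\partial_t(R+\kappa)=\Delta(R+\kappa)-(R+\kappa)+\big(|Ric+\omega|^2-(n-\kappa)\big).
\end{equation*}
The bound $|Ric|^2\ge R^2/n$ is too weak here, since it leaves the positive constant $\kappa(1-\kappa/n)$ in the equation for $w$. Instead I will use the structure $Ric+\omega=\omega-\chi-\sqrt{-1}\partial\bar\partial u$. Since $\chi$ has rank at most $\kappa$, let $V$ be the $(n-\kappa)$-dimensional $\omega$-orthogonal complement of the image of $\chi$. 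Cauchy--Schwarz restricted to $V$ gives
\begin{equation*}
|Ric+\omega|^2\ge\frac{\big((n-\kappa)-\mathrm{tr}_V\sqrt{-1}\partial\bar\partial u\big)^2}{n-\kappa}\ge (n-\kappa)-E,\qquad E:=2\,|\mathrm{tr}_V\sqrt{-1}\partial\bar\partial u|.
\end{equation*}
At points where $-R-\kappa>0$ this yields $\partial_t w\le\Delta w+w+E$, in the barrier/distributional sense. The term $E$ is pointwise controlled by $|\nabla\bar\nabla u|$. Moreover, since $\mathrm{tr}_\omega\chi\to\kappa$ and $\Delta u=-R-\mathrm{tr}_\omega\chi$, the fibrewise part of the Hessian of $u$ should tend to zero. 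Combining the Hessian bounds of Lemma \ref{Hessian-estimates-1} and \eqref{L2Ricbd} with Lemma \ref{lpto0}, I aim to show $\fint_X E^p\,\omega(t)^n\to0$ for some $p>n$.

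\textbf{Step 2: smoothing via ultracontractivity.} Fix a large $t_0$ and pass to the unnormalized Ricci flow $\tilde g(s)$ on $s\in[0,1]$ as in Step 1 of the proof of Theorem \ref{ZhZh22}, under which the metrics are uniformly equivalent to $g(t_0)$. The inequality for $w$ becomes $\partial_s w\le\tilde\Delta w+Cw+CE$. I will then write $w$ via Duhamel against the heat kernel of the (time-dependent) heat equation: $w(1)\le e^{C}\big(P_{0,1}w(0)+\int_0^1P_{s,1}E(s)\,ds\big)$. Theorem \ref{ZhZh22} gives $\mu(\tilde g(0),\tau)\ge\log vol-C$ for $\tau\le1$. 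After normalising the volume to one by a harmless rescaling, this is the hypothesis \eqref{mu0lb} with a uniform $A$, and $\lambda_-$ is bounded because $R$ is bounded by \eqref{stc1}. Theorem \ref{ZhZl21} then gives
\begin{equation*}
\|P_{0,1}w(0)\|_{L^\infty}\le C\Big(\fint w(0)^q\Big)^{1/q},
\end{equation*}
which tends to zero by Lemma \ref{lpto0}. For the source term, applying the same theorem on $[s,1]$ gives a factor $(1-s)^{-n/(2q)}$, which is integrable once $q>n$. Therefore $\sup_X w(t_0+\log 3)\to0$, which proves \eqref{s-inf=-k}.

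\textbf{Main obstacle.} The hard part is Step 1: producing an error term $E$ that is both pointwise admissible and small in $L^p$ for some $p>n$. Lemma \ref{Hessian-estimates-1} only gives $L^2$-type control of the Hessian of $u$, and smallness has to come from the trace information $\mathrm{tr}_\omega\chi\to\kappa$ and $R\to-\kappa$ in $L^p$. My first attempt will be to bound $E$ by $|\Delta u+\kappa|$ plus a term measuring $\mathrm{tr}_\omega\chi-\kappa$, which would avoid the full Hessian. If that fails, I will interpolate the $L^2$ Hessian bound against the uniform $C^0$ bounds in \eqref{stc1}, which would suffice for a suitable choice of $q$.
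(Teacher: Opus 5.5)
There is a genuine gap, exactly where you flag it, and neither fallback closes it. Your reduction of the upper inequality to Lemma \ref{lpto0} and the Duhamel/ultracontractivity bookkeeping of Step 2 are fine. The problem is the error term. Because $\chi$ vanishes on $V$, your error term is $E=2|\mathrm{tr}_V\sqrt{-1}\partial\bar\partial u|=2|\mathrm{tr}_V Ric|$, a partial (fibrewise) trace of the Ricci curvature. You need $\fint_X E^q\,\omega(t)^n\to0$ for some $q>n\ge2$, and nothing available gives this:
\begin{itemize}
\item \eqref{stc1} bounds $u$, $|\nabla u|$ and $\Delta u$, but not $\nabla\bar\nabla u$, so there is no $L^\infty$ bound to interpolate against.
\item Lemma \ref{Hessian-estimates-1} and \eqref{L2Ricbd} are only $L^2$-type bounds, and they give no control of $L^q$ norms for $q>2$.
\item No pointwise bound on $Ric$ is known near the singular fibres.
\end{itemize}

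Your first fallback rests on a false premise: $\mathrm{tr}_\omega\chi$ does not tend to $\kappa$. On $X^o$ it tends to $\mathrm{tr}_{\omega_{can}}\chi=\kappa-\Delta_{\omega_{can}}\varphi_\infty$, where $\omega_{can}=\chi+\sqrt{-1}\partial\bar\partial\varphi_\infty$ and $\varphi_\infty$ is non-constant in general; you are conflating $\chi$ with $f^*\omega_{can}$. Equivalently, $\partial_tu=\Delta u+\mathrm{tr}_\omega\chi-\kappa=-R-\kappa$ shows $\mathrm{tr}_\omega\chi-\kappa\approx-\Delta u$, which does not vanish in the limit. In any case, a trace over the subspace $V$ is not controlled by the full traces $\Delta u$, $R$, $\mathrm{tr}_\omega\chi$. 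One has $\mathrm{tr}_V Ric=R+\mathrm{tr}_\omega\chi+\mathrm{tr}_{V^\perp}\sqrt{-1}\partial\bar\partial u$, so even local smallness of $E$ would need second-order convergence of $u$ in the base directions.

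The missing idea is that no error term is needed if you stay with the genuine Ricci flow, which you already introduce in Step 2. For $\tilde g(s)=(1+2s)g(t_0+\log(1+2s))$ one has $\partial_s\tilde R=\tilde\Delta\tilde R+2|\widetilde{Ric}|^2\ge\tilde\Delta\tilde R$.
\begin{itemize}
\item Let $h$ solve the heat equation along $\tilde g$ with $h(0)=(R(t_0)+\kappa)_-$. Then $\tilde R+h$ is a supersolution with initial value $\ge-\kappa$, and the maximum principle gives $\tilde R(s)\ge-\kappa-\|h(s)\|_{L^\infty}$.
\item Theorem \ref{ZhZl21} in its $L^1$ form ($q=1$), with $A$ supplied by Theorem \ref{ZhZh22}, gives $\|h(s)\|_{L^\infty}\le Cs^{-n}\fint_X|R(t_0)+\kappa|\,\omega(t_0)^n$. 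By Lemma \ref{lpto0} this tends to $0$ as $t_0\to\infty$ for each fixed $s$.
\item The only price is the rescaling $R(t_0+\log(1+2s))=(1+2s)\tilde R(s)$. This yields $\liminf_{t\to\infty}\inf_XR(t)\ge-(1+2s)\kappa$ for every $s\in(0,1]$, and letting $s\to0$ gives the claim.
\end{itemize}
Your trouble came from insisting on the fixed gap $s=1$. Recovering the exact constant $-\kappa$ after a definite amount of normalized time is what brings the term $(n-\kappa)-|Ric+\omega|^2$ of the normalized flow into play. Sending $s\to0$ after $t_0\to\infty$ makes that term irrelevant.
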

\begin{proof}
From Lemma \ref{lpto0}, it suffices to show that $$\lim_{t\to\infty}\inf_XR(t)\geq-\kappa(X).$$

\textbf{Step 1: Modify the Ricci flow.}
Fix $t_0\geq0.$ Let $$\tilde{g}(s):=(1+2s)\cdot g(t_0+\log(1+2s)),~~0\leq s\leq1.$$ Then the metric $\tilde{g}$ and $g$ are uniformly equivalent to each other when $s\in[0,1],$ and $\tilde{g}(s)$ satisfies the Ricci flow (\ref{RF1}) with initial $g(t_0)$ (is the K\"ahler metric of $\omega(t_0)$). Recall the scalar curvature evolution
$$\frac{\partial}{\partial s}\tilde{R}=\tilde{\Delta}\tilde{R}+2|\widetilde{Ric}|^2\geq\tilde{\Delta}\tilde{R}.$$
Let $u(s)$ be a solution to the heat equation coupled with Ricci flow (\ref{RF1}) with initial value
$$u(0)=\big(\tilde{R}(0)+\kappa\big)_{-}(:=\max\{0,-\tilde{R}(0)-\kappa\})=(R(t_0)+\kappa)_{-}.$$

\textbf{Step 2: Apply Zhang's ultracontractivity.}
We observe that the scalar curvature of $\tilde{g}$ is uniformly and $$\lambda_{-}(0)\geq\inf\tilde{R}\geq-C_2.$$
Then, together with the entropy estimate (\ref{u-lowerbd}) in the previous section, the ultracontractivity property of Ricci flow (Theorem \ref{ZhZl21}) gives (putting $q=1$) for any fixed $s\in(0,1]$
$$||u(s)||_{L^{\infty}}\leq C_1\cdot\big(vol_{\omega(t_0)}(X)\big)^{-1}\cdot s^{-n}\cdot||u(0)||_{L^1(\tilde{g}(t_0)},$$
that is,
$$||u(s)||_{L^{\infty}}\leq C_1\cdot s^{-n}\cdot\fint_X|R(t_0)+\kappa|\omega(t_0)^n,$$
which tends to 0 as $t_0\to\infty.$\\

\textbf{Step 3: Apply the maximum principle.}
Note that $$\tilde{R}(0)+u(0)\geq-\kappa,$$ then the maximum principle to $\tilde{R}+u$ gives
$$\tilde{R}+u(s)\geq-\kappa,~~\forall~~s\in[0,1].$$
Hence $$\tilde{R}(s)\geq-\kappa-||u(s)||_{L^{\infty}},~~\forall~~s\in[0,1].$$
Return to the normalized K\"ahler-Ricci flow {\rm\eqref{UKRF}}, at time $t=t_0+\log(1+2s),$
\begin{eqnarray*}
R(t)&\geq&-(1+2s)\Big(\kappa+C_1\cdot s^{-n}\cdot\fint_X|R(t_0)+\kappa|\omega(t_0)^n\Big)\\
&\geq&-\kappa+O(s)~~as~t_0\to\infty,
\end{eqnarray*}
where we choose $$s=\Big(\fint_X|R(t_0)+\kappa|\omega(t_0)^n\Big)^{\frac{1}{2n}}\to0~as~t_0\to\infty.$$
Hence, we get the desired the estimates \eqref{s-inf=-k}. This completes the proof.
\end{proof}

\subsection{Upper bound of scalar curvature}
In this subsection, we prove that the upper bound of the scalar curvature tends to the negative Kodaira dimension of $X$ along the normalized K\"ahler-Ricci flow {\rm\eqref{UKRF}}.

\begin{proposition}\label{upper-bd-scalar}
Under the normalized K\"ahler-Ricci flow {\rm\eqref{UKRF}}, we have
\begin{equation}
\sup_XR(t)\longrightarrow-\kappa,~~as~t\longrightarrow\infty.
\end{equation}
\end{proposition}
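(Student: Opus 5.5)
The plan is to run the argument of Proposition \ref{slowerbd} for $(R+\kappa)_+$ in place of $(R+\kappa)_-$. The new difficulty is that the reaction term in the evolution of $R$ is no longer nonnegative, so it has to be controlled rather than dropped. Along \eqref{UKRF} one has
$$\Big(\frac{\partial}{\partial t}-\Delta\Big)(R+\kappa)=|Ric|^2+R=:Q .$$
Writing $Q=(|Ric|^2-R^2)+(R^2+R)$, we treat the two pieces separately.

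\textbf{Averaged source.} The first piece has a topological integral: by Chern--Weil,
$$\int_X(|Ric|^2-R^2)\,\omega(t)^n=c_n\int_X c_1(X)^2\wedge[\omega(t)]^{n-2}.$$
Expanding $[\omega(t)]=e^{-t}[\omega_0]-(1-e^{-t})c_1(X)$ and using $c_1(X)^{j}=0$ for $j>\kappa$, $(-c_1)^\kappa\cdot[\omega_0]^{n-\kappa}>0$, this integral divided by $vol_{\omega(t)}(X)\sim e^{-(n-\kappa)t}$ tends to $\kappa-\kappa^2$. Lemma \ref{lpto0} with $p=1,2$ gives $\fint(R^2+R)\to\kappa^2-\kappa$. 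Hence $\fint_X Q\,\omega(t)^n\to0$. Moreover $Q\ge R^2/n+R\ge-C$ by \eqref{stc1}, and $Q+\kappa-\kappa^2\ge (R+\kappa)^2/n-\ldots$ is bounded below. Together these give $\fint_X|Q|\,\omega(t)^n\to0$.

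\textbf{Parabolic step.} Pass to the rescaled Ricci flow $\tilde g(s)$ of Step 1 of Proposition \ref{slowerbd}. There the same identity holds up to harmless $O(s)$ factors. By Duhamel and the maximum principle,
$$\tilde R(s)+\kappa\le P_s\big[(R(t_0)+\kappa)_+\big]+\int_0^s P_{s-\sigma}\big[Q_+(\sigma)\big]\,d\sigma+O(s),$$
where $P$ denotes the heat propagator coupled to \eqref{RF1}. The first term is handled exactly as before: Theorem \ref{ZhZl21} with $q=1$, combined with the uniform entropy bound \eqref{u-lowerbd} and Lemma \ref{lpto0}, bounds it by $C s^{-n}\fint_X|R(t_0)+\kappa|\to0$.

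\textbf{The Duhamel term, which I expect to be the main obstacle.} The $L^1$ ultracontractive bound $C(s-\sigma)^{-n}\fint Q_+$ is not integrable near $\sigma=s$. To make the $\sigma$-integral converge I would use Theorem \ref{ZhZl21} with some $q>n$, which needs uniform smallness of $\fint|Q|^q$ or at least of $\fint|Ric|^{2q}$. My first attempt splits $X$ into a compact $K\subset X_{reg}$ and its complement. On $K$, the $C^\infty_{loc}$ convergence $\omega(t)\to f^*\omega_{can}$ of \cite{HLT24+}, with the Ricci bound away from singular fibres, gives $Q\to0$ uniformly, since $|Ric|^2\to\kappa$ and $R\to-\kappa$ there. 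On $X\setminus K$, whose normalized volume is at most $\varepsilon$, I would interpolate the $L^1$ smallness above with the local bound $\fint_{B}|Ric|^2\le Cr^{-2}$ of Lemma \ref{Hessian-estimates-1}. Covering $X\setminus K$ by balls of radius $r=r(\varepsilon)$ and using the volume comparison \eqref{volumenoncollased}, the aim is to show that the contribution of $X\setminus K$ to the Duhamel integral is $O(\varepsilon^{\theta})$ for some $\theta>0$.

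\textbf{Conclusion.} Choose $s$ as a small power of $\fint|R(t_0)+\kappa|+\varepsilon$, as in Step 3 of Proposition \ref{slowerbd}, and return to the time $t=t_0+\log(1+2s)$. This yields $\sup_X R(t)\le-\kappa+o(1)$. Lemma \ref{lpto0} gives the reverse inequality $\sup_X R(t)\ge-\kappa-o(1)$, which proves the proposition.
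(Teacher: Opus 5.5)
Your Chern--Weil computation is correct: $\fint_X(|Ric|^2-R^2)\to\kappa-\kappa^2$, hence $\fint_X Q\to0$. Your initial term is handled exactly as in the paper. The gap is the short-time part of the Duhamel term, $\int_{s-\delta}^{s}P_{s-\sigma}[Q_+(\sigma)]\,d\sigma$, at points near the singular fibres.

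The part with $s-\sigma\ge\delta$ is harmless: the $L^1\to L^\infty$ bound gives $C\delta^{1-n}\sup_\sigma\fint_X Q_+$. On $X\setminus K$, however, your only input is Lemma \ref{Hessian-estimates-1}, $\fint_{B(x,r,t)}|Ric|^2\le Cr^{-2}$, and this bound is scale invariant. The Gaussian bound actually available is $H(x,\sigma;y,s)\le C\,vol_{\omega(t_0)}(X)^{-1}(s-\sigma)^{-n}e^{-d^2/(C(s-\sigma))}$. The factor $vol^{-1}$ is forced by $\int_X H\,d\tilde g_\sigma=1$, and it matches $\mu\ge\log vol-C$. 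Summing over annuli with $\int_{B(y,\rho)}|Ric|^2\le C\,vol\,\rho^{2n-2}$ (from \eqref{volumenoncollased}) gives only $P_{s-\sigma}[|Ric|^2]\le C/(s-\sigma)$. So each dyadic layer $s-\sigma\in[2^{-j-1},2^{-j}]$ contributes $O(1)$, and the estimate diverges logarithmically instead of giving $O(\varepsilon^\theta)$.

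The global smallness cannot be interpolated in. The quantity $r^2\fint_{B(y,r)}|Q|\le Cr^{2-2n}\fint_X|Q|$ is small only for $r\gg(\fint_X|Q|)^{1/(2n-2)}$. A critical Morrey bound combined with an $L^1$ bound never yields the $L^q$, $q>n$, integrability you correctly identify as necessary.

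Since $R$ is bounded, the Duhamel integral is finite. But proving it is \emph{small} means excluding $|Ric|^2$ concentrating near $f^{-1}(D)$ at scales tending to $0$ and raising $R$ there by a fixed amount. That is the whole content of the proposition. What is missing is a subcritical estimate near the singular fibres, uniform in $t$, for example:
\begin{itemize}
\item $\fint_X|Ric|^{2q}\omega(t)^n\le C$ for some $q>n$. Mere boundedness would already suffice, since it gives $O(s^{1-n/q})$ for the whole Duhamel term.
\item Or $r^2\fint_{B(x,r,t)}|Ric|^2\le Cr^{\alpha}$ with $\alpha>0$.
\end{itemize}
Nothing in the paper supplies such an estimate.

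You also cannot import this step from the paper's proof. There the source is the nonnegative term $2|\widetilde{Ric}|^2$ for $v=\tilde R-A_0$, with no cancellation. Step 5 obtains the decisive factor $vol_{\omega(t_0)}(X)$ only by using $H\le C_1^*(\tau-s)^{-n}$ without the $vol^{-1}$. With the correct normalization that factor cancels, and one is back to $F\le C/(\tau-s)$. Indeed, for $\kappa\ge1$ your own computation $\fint_X|Ric|^2\to\kappa$, together with $\int_X F\,d\tilde g_\tau\ge C^{-1}\int_X|\widetilde{Ric}|^2\,d\tilde g_s$, shows that $F$ cannot tend to $0$.

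Two smaller points in your write-up:
\begin{itemize}
\item $\fint_X Q\to0$ and $Q\ge -C$ do not imply $\fint_X|Q|\to0$. You need $\fint_X Q_-\to0$, which does follow from $\fint_X Q_-\le C\varepsilon+\sup_K Q_-$ once $Q\to0$ uniformly on $K$.
\item That uniform convergence requires $|Ric|^2_{\omega(t)}\to\kappa$ on $K$, i.e. curvature convergence measured in the collapsing metrics. This is stronger than $C^\infty_{loc}$ convergence of $\omega(t)$ in a fixed background metric plus a Ricci bound, so it needs a precise reference.
\end{itemize}
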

\begin{proof}
\textbf{Step 1: Modify the Ricci flow.}
Fix $t_0\geq0.$ Let $$\tilde{g}(s):=(1+2s)\cdot g(t_0+\log(1+2s)),~~0\leq s\leq1.$$ Then the metric $\tilde{g}$ and $g$ are uniformly equivalent to each other when $s\in[0,1],$ and $\tilde{g}(s)$ satisfies the Ricci flow (\ref{RF1}) with initial $g(t_0)$ (is the K\"ahler metric of $\omega(t_0)$). Recall the scalar curvature evolution
$$\frac{\partial}{\partial s}\tilde{R}=\tilde{\Delta}\tilde{R}+2|\widetilde{Ric}|^2.$$
Let $A_0=\inf_X\tilde{R}(0).$ Then
$$\tilde{R}\geq A_0,~~on~X\times[0,1].$$
We are going to improve the upper bound of $\tilde{R}.$

Let $H(x,s;y,\tau),~0\leq s\leq\tau\leq1,$ be the heat kernel under Ricci flow (\ref{RF1}), which satisfies
$$\frac{\partial}{\partial\tau}H(\cdot,\cdot;y,\tau)=\tilde{\Delta}_{y,\tau}H(\cdot,\cdot;y,\tau),$$
and
$$-\frac{\partial}{\partial s}H(x,s;\cdot,\cdot)=\tilde{\Delta}_{x,s}H(x,s;\cdot,\cdot)-\tilde{R}(x,s)H(x,s;\cdot,\cdot).$$
The heat kernel has a uniform integral bound
\begin{equation}\label{hkjifen1}
\int_XH(x,s;y,\tau)d\tilde{g}_s(x)=1
\end{equation}
and
\begin{equation}
C^{-1}\leq\int_XH(x,s;y,\tau)d\tilde{g}_{\tau}(y)\leq C.
\end{equation}
The first one is obvious; the second one can be concluded from the derivative estimate under Ricci flow
\begin{align*}
\big|\frac{d}{d\tau}\int_XH(x,s;y,\tau)d\tilde{g}_{\tau}(y)\big|~&=~\big|\int_X\tilde{R}(y,\tau)\cdot H(x,s;y,\tau)d\tilde{g}_{\tau}(y)\big|\\
~&\leq~-A_0\int_XH(x,s;y,\tau)d\tilde{g}_{\tau}(y),
\end{align*}
with initial value
$$\lim_{\tau\to s^+}\int_XH(x,s;y,\tau)d\tilde{g}_{\tau}(y)=1.$$

\textbf{Step 2: Normalized scalar curvature evolution equation.}
Let $v=\tilde{R}-A_0,$ which is a bounded function and satisfies
$$\frac{\partial}{\partial s}v=\tilde{\Delta}v+2|\widetilde{Ric}|^2.$$
Write
$$h(s;y,\tau)=\int_Xv(x,s)H(x,s;y,\tau)d\tilde{g}_{s}(x)$$
which satisfies
$$\frac{d}{ds}h(s;\cdot,\cdot)=\int_X\big(\frac{\partial}{\partial s}-\tilde{\Delta}\big)v(x,s)H(x,s;\cdot,\cdot)d\tilde{g}_{s}(x)=2\int_X|\widetilde{Ric}|^2(x,s)H(x,s;\cdot,\cdot)d\tilde{g}_{s}(x)$$
with initial value
$$\lim_{s\to\tau^-}h(s;y,\tau)=v(y,\tau).$$
Therefore, for any $(y,\tau)\in X\times[0,1]$, we have
$$v(y,\tau)=h(0;y,\tau)+2\int_0^{\tau}\int_X|\widetilde{Ric}|^2(x,s)H(x,s;y,\tau)d\tilde{g}_{s}(x)ds$$
For any fixed $0\leq s<\tau$ and $y\in X$, put
$$F(s;y,\tau)=\int_X|\widetilde{Ric}|^2(x,s)H(x,s;y,\tau)d\tilde{g}_{s}(x)$$
and
$$f(y,\tau)=\int_0^{\tau}F(s;y,\tau)ds.$$
Then,
\begin{equation}\label{vjifenbs}
v(y,\tau)=h(0;y,\tau)+2f(y,\tau).
\end{equation}

\textbf{Step 3: Estimates on $v(y,\tau)$.}
We are going to estimate $h$ and $f$ on the right hand side of (\ref{vjifenbs}) pointwisely. Then point is that both are small whenever $\tau$ is chosen sufficiently small. As for the first term we notice that $h(0;y,\tau)$ is exactly the solution to the heat equation with initial value $v(y,0).$ It is nonnegative, so by the ultracontractivity property of Ricci flow again, as in the proof of Proposition \ref{slowerbd}, we have
\begin{eqnarray}
\nonumber\sup_{y\in X}h(0;y,\tau)&\leq&C\cdot\tau^{-n}\cdot\fint_Xv(x,0)d\tilde{g}_0(x)\\
&=&C\cdot\tau^{-n}\cdot\fint_X(R(t_0)-A_0)\omega(t_0)^n\longrightarrow0
\end{eqnarray}
as $t_0\longrightarrow\infty$ for any fixed $\tau>0.$

Next, we will estimate $f.$ First of all, notice that (\ref{vjifenbs}) gives a rough bound
\begin{equation}
f(y,\tau)\leq\frac{1}{2}v(y,\tau)\leq C,~~for~any~(y,\tau)\in X\times[s,1];
\end{equation}
moreover, we also have
\begin{eqnarray*}
\int_XF(s;y,\tau)d\tilde{g}_{\tau}(y)&=&\int_X\big(\int_X|\widetilde{Ric}|^2(x,s)H(x,s;y,\tau)d\tilde{g}_{s}(x)\big)d\tilde{g}_{\tau}(y)\\
&=&\int_X|\widetilde{Ric}|^2(x,s)\big(\int_XH(x,s;y,\tau)d\tilde{g}_{\tau}(y)\big)d\tilde{g}_{s}(x)\\
&\leq&C\cdot\int_X|\widetilde{Ric}|^2(x,s)d\tilde{g}_{s}(x)\leq C
\end{eqnarray*}
where we use (\ref{L2Ricbd}) under normalized K\"ahler-Ricci flow. It follows that
\begin{equation*}
\int_Xf(y,\tau)d\tilde{g}_{\tau}(y)=\int_0^{\tau}\int_XF(s;y,\tau)d\tilde{g}_{\tau}(y)ds\leq C\cdot\tau.
\end{equation*}

\textbf{Step 4: Integral estimate for the Hessian of Ricci potential.} Initially, employing \eqref{average-integral-Ricci-estimates} in Lemma \ref{Hessian-estimates-1} along with the uniform equivalence of the metrics $\tilde{g}$ and $g$ yields the following: there exists $\varepsilon_0$ such that for any $r>0$
\begin{equation}\label{riccipj}
\frac{1}{|\tilde{B}(y,s,r)|_s}\int_{\tilde{B}(y,s,r)}|\widetilde{Ric}|^2(x,s)d\tilde{g}_{s}(x)\leq\varepsilon_0\cdot r^{-2},
\end{equation}
where $\tilde{B}(x,r,t)$ is the metric ball center at $x$ with radius $r$ with respect to $\tilde{g}$, and $|\tilde{B}(x,r,t)|_t$ is the volume of $\tilde{B}(x,r,t)$ with respect to $\tilde{g}$.

\textbf{Step 5: Weak version on upper bound of scalar curvature.} Let $A(y,s,k)$ denote the annulus $\tilde{B}(y,s,2^k)\setminus \tilde{B}(y,s,2^{k-1}).$ Note that, by the heat kernel estimates of Bamler and Zhang under the assumption that the scalar curvature is bounded \cite{BamZh17}, we have
\begin{align}\label{ubdonF}
\nonumber F(s;y,\tau)~&=~\int_X|\widetilde{Ric}|^2(x,s)H(x,s;\cdot,\cdot)d\tilde{g}_{s}(x)\\\nonumber
&=~\sum_{k=-\infty}^{+\infty}\int_{A(y,s,k)}|\widetilde{Ric}|^2(x,s)H(x,s;\cdot,\cdot)d\tilde{g}_{s}(x)\\ \nonumber
&\leq~\sum_{k=-\infty}^{+\infty}\int_{A(y,s,k)}|\widetilde{Ric}|^2(x,s)\frac{C_1^*}{(\tau-s)^{n}}exp\big(-\frac{d_s(x,y)^2}{C_2^*(\tau-s)}\big)d\tilde{g}_{s}(x)\\
&\leq~\sum_{k=-\infty}^{+\infty}\frac{1}{2^{2nk}}\int_{A(y,s,k)}|\widetilde{Ric}|^2(x,s)d\tilde{g}_{s}(x)\cdot\frac{C_1^*}{(\tau-s)^{n}}exp\big(-\frac{2^{2k}}{4C_2^*(\tau-s)}\big)\cdot2^{2nk}.
\end{align}
Combining (\ref{volumenoncollased}), (\ref{riccipj}) with (\ref{ubdonF}), we have
\begin{align}
\nonumber F(s;y,\tau)&~\leq~\sum_{k=-\infty}^{+\infty}\frac{1}{2^{2nk}}\int_{A(y,s,k)}|\widetilde{Ric}|^2(x,s)d\tilde{g}_{s}(x)\cdot\frac{C_1^*}{(\tau-s)^{n}}exp\big(-\frac{2^{2k}}{4C_2^*(\tau-s)}\big)\cdot2^{2nk}\\ \nonumber
&~\leq~\sum_{k=-\infty}^{+\infty}\varepsilon_0\cdot\frac{1}{2^{2k}}\cdot\frac{C_1^*}{(\tau-s)^{n}}exp\big(-\frac{2^{2k}}{4C_2^*(\tau-s)}\big)\cdot2^{2nk}\cdot C\cdot vol_{\omega(t_0)}(X)\\ \nonumber
&~\leq~\frac{C}{\tau-s}\cdot vol_{\omega(t_0)}(X)\cdot\int_{\mathbb{R}^{2n-2}}\frac{\varepsilon_0\cdot C_1^*}{(\tau-s)^{n-1}}\cdot exp\big(-\frac{|x|^2}{4C_2^*(\tau-s)}\big)dx\\ \nonumber
&~=~\frac{C}{\tau-s}\cdot vol_{\omega(t_0)}(X)\cdot\int_{\mathbb{R}^{2n-2}}\varepsilon_0\cdot C_1^*\cdot exp\big(-\frac{|x|^2}{4C_2^*}\big)dx\\
&~\leq~\frac{C}{\tau-s}\cdot vol_{\omega(t_0)}(X)\leq \frac{C}{\tau-s}\cdot e^{-(n-\kappa)t_0}\longrightarrow0
\end{align}
as $t_0\longrightarrow\infty$ for any fixed $\tau>s\ge0.$ Consequently, for any fixed $\tau\in[0,1]$, we have $v(y,\tau)\to0$ as $t_0\to\infty$. Taking this together with the estimate from Step 3 and the uniform equivalence of the metrics $\tilde{g}$ and $g$, we arrive at the desired upper bound for the scalar curvature along the normalized K\"ahler-Ricci flow. This completes the proof.
\end{proof}

The Theorem \ref{scalar-curvature-convergence} follows from Proposition \ref{slowerbd} and Proposition \ref{upper-bd-scalar}.\qed



\begin{thebibliography}{99}
\bibitem{Bam18}
Richard H Bamler. \emph{Convergence of Ricci flows with bounded scalar curvature.} Ann. of Math. (2) 188 (2018), no. 3, 753-831.

\bibitem{Bam20}
Richard H Bamler. \emph{Entropy and heat kernel bounds on a Ricci flow backgound.} arXiv:2008.07093v1.

\bibitem{BamZh17}
Richard H Bamler, Qi S Zhang. \emph{Heat kernel and curvature bounds in Ricci flows with bounded scalar curvature.} Adv. Math. 319 (2017), 396-450.


\bibitem{Cao85}
Huaidong Cao. \emph{Deformation of K\"ahler metrics to K\"ahler-Einstein metrics on compact K\"ahler manifolds.} Invent. Math. 81 (1985), no. 2, 359-372.


\bibitem{CMZ22}
Pak-Yeung Chan, Zilu Ma, Yongjia Zhang.\emph{A uniform Sobolev inequality for ancient Ricci flows with bounded Nash entropy.} Int. Math. Res. Not. IMRN 2023, no. 13, 11127-11144.

\bibitem{CMZ23}
Pak-Yeung Chan, Zilu Ma, Yongjia Zhang.\emph{A local Sobolev inequality on Ricci flow and its applications.} J. Funct. Anal. 285 (2023), no. 5, Paper No. 109995, 36 pp.

\bibitem{CW12}
Xiuxiong Chen, Bing Wang. \emph{Space of Ricci flows I.} Comm. Pure Appl Math. 65 (2012), no. 10, 1399-1457.


\bibitem{CW20}
Xiuxiong Chen, Bing Wang. \emph{Space of Ricci flows (II)-Part B: Weak compactness of the flows.} J. Differential Geom. 116 (2020), no. 1, 1-123.

\bibitem{CLee23}
Jianchun Chu, Man-Chun Lee \emph{On the H\"older estimate of K\"ahler-Ricci flow.} Int. Math. Res. Not. IMRN 2023, no. 6, 4932-4951.

\bibitem{FoZh15}
Frederick Tsz-Ho Fong, Zhou Zhang. \emph{The collapsing rate of the K\"ahler-Ricci flow with regular infinite time singularity.} J. Reine Angew. Math. 703 (2015), 95-113.

\bibitem{FGS20}
Xin Fu, Bin Guo, Jian Song. \emph{Geometric estimates for complex Monge-Amp\`{e}re equations}. J. Reine Angew. Math. 765 (2020), 69-99.

\bibitem{GTZ13}
Mark Gross, Valentino Tosatti, Yuguang Zhang. \emph{Collapsing of abelian fibered Calabi-Yau manifolds.} Duke Math. J. 162 (2013), no. 3, 517-551.


\bibitem{GTZ16}
Mark Gross, Valentino Tosatti, Yuguang Zhang. \emph{Gromov-Hausdorff collapsing of CalabiYau manifolds.} Comm. Anal. Geom. 24 (2016), no. 1, 93-113.

\bibitem{GTZ20}
Mark Gross, Valentino Tosatti, Yuguang Zhang. \emph{Geometry of twisted K\"ahler-Einstein metrics and collapsing.} Comm. Math. Phys. 380 (2020), no. 3, 1401-1438.



\bibitem{GuPhSoSt24-1} Bin Guo, Duong H Phong, Jian Song and Jacob Sturm. {\it Diameter estimates in K\"ahler geometry}, Comm. Pure Appl. Math. 77 (2024), no. 8, 3520-3556.

\bibitem{GS21}
Bin Guo, Jian Song. \emph{Positivity of Weil-Petersson currents on canonical models.} Pure Appl. Math. Q. 17 (2021), no. 3, 1045-1059.


\bibitem{HJST24}
Max Hallgren, Wangjian Jian, Jian Song, Gang Tian. \emph{Geometric regularity of blow-up limits of the K\"ahler-Ricci flow.} Geom. Funct. Anal. 34 (2024), no. 6, 1899-1972.


\bibitem{HLT24+}
Hans-Joachim Hein, Man-Chun Lee, Valentino Tosatti. \emph{Collapsing immortal K\"ahler-Ricci flows.} Forum Math. Pi 13 (2025), Paper No. e18, 98 pp.

\bibitem{HeTo15}
Hans-Joachim Hein, Valentino Tosatti. \emph{Remarks on the collapsing of torus fibered Calabi-Yau manifolds.} Bull. Lond. Math. Soc. 47 (2015), no. 6, 1021-1027.

\bibitem{Jian20}
Wangjian Jian. \emph{Convergence of scalar curvature of K\"ahler-Ricci flow on manifolds of positive Kodaira dimension.} Adv. Math. 371 (2020), 107253, 27 pp.

\bibitem{JS22}
Wangjian Jian, Jian Song. \emph{Diameter estimates for long-time solutions of the K\"ahler-Ricci flow.} Geom. Funct. Anal. 32 (2022), no. 6, 1335-1356.

\bibitem{JST23+}
Wangjian Jian, Jian Song and Gang Tian. \emph{Finite time singularities of the K\"ahler-Ricci flow.} arXiv:2310.07945v1.

\bibitem{LTZ26+}
Man-Chun Lee, Valentino Tosatti and Junsheng Zhang. \emph{Gromov-Hausdorff limits of immortal K\"ahler-Ricci flows.} arXiv:2602.19913

\bibitem{Li07} Junfang Li. \emph{Eigenvalues and energy functionals with monotonicity formulae under Ricci flow.} Math. Ann. 338 (2007), no. 4,927-946.


\bibitem{LuWa20}
Yu Li, Bing Wang. \emph{Heat kernel on Ricci shrinkers.} Calc. Var. Partial Differential Equations 59 (2020), no. 6, Paper No. 194, 84 pp.

\bibitem{Ro81}
Rothaus, O. S. \emph{Logarithmic Sobolev inequalities and the spectrum of Schr\"odinger operators.} J. Functional Analysis 42 (1981), no. 1, 110-120.



\bibitem{SesT08}
Natasa Sesum, Gang Tian. \emph{Bounding scalar curvature and diameter along the K\"ahler-Ricci flow (After Perelman)}.J. Inst. Math. Jussieu 7 (2008) 575-587.

\bibitem{So14}
Jian Song. \emph{Finite-time extinction of the K\"ahler-Ricci flow.} Math. Res. Lett. 21 (2014), no. 6, 1435-1449.

\bibitem{SSW13}
Jian Song, Gabor Sz\'ekelyhidi, Ben Weinkove. \emph{The K\"ahler-Ricci flow on projective bundles}. Int. Math. Res. Not. IMRN 2013, no. 2, 243-257.

\bibitem{SW13}
Jian Song, Ben Weinkove. \emph{Contracting exceptional divisors by the K\"ahler-Ricci flow}. Duke Math. J. 162 (2013), no. 2, 367-415.

\bibitem{SW13-1}
Jian Song, Ben Weinkove. \emph{An introduction to the K\"ahler-Ricci flow.} An introduction to the K\"ahler-Ricci flow, 89¨C188, Lecture Notes in Math., 2086, Springer, Cham, 2013.

\bibitem{ST07}
Jian Song, Gang Tian \emph{The K\"ahler-Ricci flow on surfaces of positive Kodaira dimension}. Invent. Math. 170 (2007), no. 3, 609-653.

\bibitem{ST12}
Jian Song, Gang Tian \emph{Canonical measures and K\"ahler-Ricci flow}. J. Amer. Math. Soc. 25 (2012), no. 2, 303-353.
\bibitem{ST16}

Jian Song, Gang Tian \emph{Bounding  scalar curvature for global solutions of the K\"ahler-Ricci flow.} Amer. J. Math. 138 (2016), no. 3, 683-695.

\bibitem{ST17}
Jian Song, Gang Tian. \emph{The K\"ahler-Ricci flow through singularities}. Invent. Math. 207 (2017), no. 2, 519-595.

\bibitem{STZ19}
Jian Song, Gang Tian, Zhenlei Zhang. \emph{Collapsing behavior of Ricci-flat K\"ahler metrics and long time solutions of the K\"ahler-Ricci flow.} arXiv:1904.08345.

\bibitem{Sz25}
Gabor Sz\'ekelyhidi.  \emph{Singular K\"ahler-Einstein metrics and RCD spaces.} Forum Math. Pi 13 (2025), Paper No. e24, 33 pp.

\bibitem{Sz25+}
Gabor Sz\'ekelyhidi. \emph{Gromov-Hausdorff limits of collapsing Calabi-Yau fibrations.} arXiv:2505.14939.

\bibitem{TZhaZ16}
Gang Tian, Zhenlei Zhang. \emph{Convergence of K\"ahler-Ricci flow on lower-dimensional algebraic manifolds of general type.} Int. Math. Res. Not. IMRN 2016, no. 21, 6493-6511.

\bibitem{TZhZ16}
Gang Tian, Zhenlei Zhang. \emph{Regularity of K\"ahler-Ricci flows on Fano manifolds.} Acta Math. 216 (2016), no. 1, 127-176.

\bibitem{TZhZ21}
Gang Tian, Zhenlei Zhang. \emph{Relative volume comparison of Ricci flow.} Sci. China Math. 64 (2021), no. 9, 1937-1950.

\bibitem{TZh06}
Gang Tian, Zhou Zhang \emph{On the K\"ahler-Ricci flow on projective manifolds of general type}. Chinese Ann. Math. Ser. B 27 (2006), no. 2, 179-192.

\bibitem{To18}
Valentino Tosatti. \emph{KAWA lecture notes on the K\"hler-Ricci flow.} Ann. Fac. Sci. Toulouse Math. (6) 27 (2018), no. 2, 285-376.

\bibitem{TWY18}
Valentino Tosatti, Ben Weinkove, Xiaokui Yang. \emph{The K\"ahler-Ricci flow, Ricci-flat metrics and collapsing limits}. Amer. J. Math. 140 (2018), no. 3, 653-698.

\bibitem{TosZh15}
Valentino Tosatti, Yuguang Zhang. \emph{Infinite-time singularities of the K\"ahler-Ricci flow.} Geom. Topol. 19 (2015), no. 5, 2925-2948.

\bibitem{TosZh18}
Valentino Tosatti, Yuguang Zhang. \emph{Finite time collapsing of the K\"ahler-Ricci flow on threefolds}. Ann. Sc. Norm. Super. Pisa Cl. Sci. (5) 18 (2018), no. 1, 105-118.

\bibitem{Tsu88}
Hajime Tsuji. \emph{Existence and degeneration of K\"ahler-Einstein metrics on minimal algebraic varieties of general type}. Math. Ann. 281 (1988), no. 1, 123-133.



\bibitem{Wan18}
Bing Wang. \emph{The local entropy along Ricci flow Part A: the no-local-collapsing theorems.} Camb. J. Math. 6 (2018), no. 3, 267-346.

\bibitem{ZJS25+}
Junsheng Zhang. \emph{Weak transcendental base-point freeness and diameter lower bounds for the K\"ahler-Ricci flow.} arXiv:2511.14735.


\bibitem{ZhaZh23}
Lei Zhang, Zhenlei Zhang. \emph{On the finite time collapsing rate of K\"ahler-Ricci flow on projective bundles.} Proc. Amer. Math. Soc. 151 (2023), no. 12, 5385-5389.

\bibitem{ZhZh25-2} L. Zhang and Z. L. Zhang, {\it Complex Monge-Amp\`ere Equation in Orlicz Space and Diameter Bound.} arXiv:2601.09893

\bibitem{ZhY19}
Yashan Zhang. \emph{Collapsing limits of the K\"ahler-Ricci flow and the continuity method}. Math.Ann.374 (2019),no.1-2,331-360.

\bibitem{ZhQ16}
Qi S Zhang. \emph{Sobolev Inequalites, Heat Kernels under Ricci Flow, and the Poincar\'{e} Conjecture.} CRC Press, Boca Raton, FL, 2011. x+422 pp. ISBN: 978-1-4398-3459-6.

\bibitem{ZhZ09}
Zhou Zhang \emph{Scalar curvature bound for K\"ahler-Ricci flows over minimal manifolds of general type.} Int. Math. Res. Not. IMRN 2009, no. 20, 3901-3912.


\end{thebibliography}
\end{document}